\tikzset{
    partial ellipse/.style args={#1:#2:#3}{
    insert path={+ (#1:#3) arc (#1:#2:#3)}
    }
}
\newcommand{\vertex}{\node[vertex]}
\tikzstyle{vertex}=[circle, draw, inner sep=0pt, minimum size=4pt]
\tikzstyle{vertexa}=[circle, draw, inner sep=0pt, minimum size=2pt]
\providecommand{\U}[1]{\protect\rule{.1in}{.1in}}
\newtheorem{theorem}{Theorem}
\newtheorem{corollary}[theorem]{Corollary}
\newtheorem{example}[theorem]{Example}
\newtheorem{lemma}[theorem]{Lemma}
\newtheorem{remark}[theorem]{Remark}
\newenvironment{proof}[1][Proof]{\textbf{#1.} }{\small$\square$}
\begin{document}

\title{The Laplacian eigenvalue $2$ of bicyclic graphs}
\author{{\small Doost Ali Mojdeh\thanks{Corresponding author}, {\small Mohammad Habibi{$^\dag$}}}, {\small Masoumeh Farkhondeh{$^\ddag$}}\\ {\small {$^*$}Department of Mathematics,}
\\{\small University of Mazandaran, Babolsar, Iran}\\ {\small email: {$^*$}damojdeh@umz.ac.ir}\\
{\small {$^{\dag \ddag}$}Department of Mathematics}\\
{\small Tafresh University, Tafresh, Iran}\\
{\small email: {$^\dag$}mhabibi@tafreshu.ac.ir,}\\
{\small email: {$^\ddag$}mfarkhondeh81@gmail.com }}
\date{}
\maketitle

\begin{abstract} If $G$ is a graph, its Laplacian is the
difference between diagonal matrix of its vertex degrees and its
adjacency matrix. A one-edge connection of two graphs $G_{1}$ and
$G_{2}$ is a graph $G=G_{1}\odot G_{2}$ with $V(G)=V(G_{1})\cup
V(G_{2})$ and $E(G)= E(G_{1})\cup E(G_{2})\cup \{e=uv\}$ where $u\in
V(G_1)$ and $v\in V(G_2)$. In this paper, we consider the
eigenvector of unicycle graphs.  We study the relationship between
the Laplacian eigenvalue $2$ of unicyclic graphs $G_1$ and $G_2$;
and bicyclic graphs $G=G_{1}\odot G_{2}$. We also characterize the
broken sun graphs and the one edge connection of two broken sun
graphs by their Laplacian eigenvalue $2$.
\end{abstract}

{\textbf  MSC 2010:} Primary 05C50; Secondary 15A18

{\textbf Keywords}: Laplacian eigenvalue, multiplicity, eigenvector, unicyclic
graph, bicyclic graph

\section{Introduction}
\indent All graphs in this paper are finite and undirected with no
loops or multiple edges. Let $G$ be a graph. The vertex set and the
edge set of $G$ are denoted by $V(G)$ and $E(G)$, respectively. The
\textit{Laplacian matrix} of $G$ is $L(G)=D(G)-A(G)$, where
$D(G)=diag(d(v_{1}),\ldots , d(v_{n}))$ is a diagonal matrix and
$d(v)$ denotes the degree of the vertex $v$ in $G$ and $A(G)$ is the
adjacency matrix of $G$. Denoting its eigenvalues by $\mu_{1}(G)\geq
\cdots \geq \mu_{n}(G)=0$, we shall use the notation $\mu_k(G)$ to
denote the $k$th  Laplacian eigenvalue of the graph G. Also, the
multiplicity of the eigenvalue $\mu$ of $L(G)$ is denoted by
$m_{G}(\mu)$. For any $v\in V(G)$, let $N(v)$ be the set of all
vertices adjacent to $v$. A vertex of degree one is called a
\textit{pendant} vertex and a vertex is said \textit{quasi pendant}
 (support vertex) if it is incident to a pendant
vertex. A \textit{matching} of $G$ is a set of pairwise disjoint
edges of $G$. The \textit{matching number} of $G$ is a maximum of
cardinalities of all matchings of $G$, denoted by $\alpha^{'}(G)$.
Clearly, $n\geq \alpha^{'}(G)$. In particular, if
$n=2\alpha^{'}(G)$, then $G$ has a perfect matching.\\
\indent Connected graphs in which the number of edges equals the
number of vertices are called \textit{unicyclic graphs}. Therefore,
a unicyclic graph is either a cycle or a cycle with some attached
trees. Let $\mathfrak{U}_{n,g}$ be the set of all unicyclic graphs
of order $n$ with girth $g$. A \textit{rooted tree} is a tree in
which one vertex has been designated the root. Furthermore, assume
$T_{i}$ is a rooted tree of order $n_{i}\geq 1$ attached to
$v_{i}\in V(T_{i})\cap V(C_{g})$, where $\sum_{i=1}^{g}n_{i}=n$.
This unicyclic graph is denoted by $C(T_{1},...,T_{g})$. The
\textit{sun graph} of order $2n$ is a cycle $C_{n}$ with an edge
terminating in a pendant vertex attached to each vertex. A
\textit{broken sun graph} is a unicyclic subgraph of a sun graph.
Let $n_{i}(G)$ be the number of vertices of degree $i$ in $G$.

A \textit{ one-edge connection} of two graphs $G_{1}$ and $G_{2}$
is a graph $G$ with $V(G)=V(G_{1})\cup V(G_{2})$ and $E(G)=
E(G_{1})\cup E(G_{2})\cup \{e=uv\}$ where $u\in V(G_1)$ and $v\in
V(G_2)$ is denoted by $G=G_{1}\odot G_{2}$. If
$G_{1}=C(T_1,\ldots,T_{g_1})$ and $G_{2}=C(T'_1,\ldots,T'_{g_2})$
are unicyclic graphs, then $G=G_{1}\odot G_{2}$ is a bicyclic graph
and denoted  by $G=C(T_1,\ldots,T_{g_1},T'_1,\ldots,T'_{g_2})$. In
this manuscript, we would like to study the eigenvalue 2 in bicyclic
graphs. We provide a necessary and sufficient condition under which
a bicyclic graph with a perfect matching has a Laplacian eigenvalue
2. For more about Laplacian of some parameters of graphs  we refer
\cite{akbari, kiani1, kiani2, mirzakhah}.\par

\section{Preliminary results}
\indent By \cite[Theorem 13]{mieghem} due to Kelmans and Chelnokov,
the Laplacian coefficient, $\xi_{n-k}$, can be expressed in terms of
subtree structures of $G$, for $0\leq k\leq n$. Suppose that $F$ is
a spanning forest of $G$ with components $T_{i}$ of order $n_{i}$,
and $\gamma(F)=\Pi_{i=1}^{k}n_{i}$. The \textit{Laplacian
characteristic polynomial} of $G$ is denoted by $ L_{G}(x)=
det(xI-L(G)) = \Sigma_{i=0}^{n}(-1)^{i}\xi_{i}x^{n-i}$.
\begin{theorem}\label{component} \emph{\cite[Theorem~7.5]{biggs}}
The Laplacian coefficient $\xi_{n-k}$ of a graph $G$ of order $n$ is
given by $\xi_{n-k}=\sum_{F\in \mathfrak{F}_{k}}\gamma(F)$, where
$\mathfrak{F}_{k}$ is the set of all spanning forest of $G$ with
exactly $k$ components.
\end{theorem}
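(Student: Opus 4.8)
The plan is to derive this statement from the incidence‑matrix (Cauchy–Binet) version of the matrix‑tree theorem: first rewrite $\xi_{n-k}$ as a sum of principal minors of $L(G)$, and then interpret each such minor combinatorially as a (weighted) count of spanning forests.

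First I would use the classical fact that the coefficients of a characteristic polynomial are the sums of principal minors. Since $L_G(x)=\det(xI-L(G))=\sum_{i=0}^n(-1)^i\xi_i x^{n-i}$, this yields
$$\xi_{n-k}=\sum_{\substack{S\subseteq V(G)\\ |S|=n-k}}\det L(G)[S],$$
where $L(G)[S]$ is the principal submatrix of $L(G)$ on the rows and columns indexed by $S$; equivalently the sum is indexed by the $k$‑subsets $T=V(G)\setminus S$ of deleted vertices. Next, fix any orientation of $G$ with oriented incidence matrix $N$ (an $n\times|E(G)|$ matrix), so that $L(G)=NN^{\top}$ and hence $L(G)[S]=N_S N_S^{\top}$, where $N_S$ retains the rows of $N$ in $S$. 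The Cauchy–Binet formula then gives
$$\det L(G)[S]=\sum_{\substack{E'\subseteq E(G)\\ |E'|=n-k}}\bigl(\det N_S[E']\bigr)^2,$$
with $N_S[E']$ the square $(n-k)\times(n-k)$ submatrix on rows $S$ and columns $E'$; in particular every principal minor is nonnegative, so no cancellation can occur.

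The combinatorial core is the classical forest/unimodularity lemma: $\det N_S[E']=\pm1$ if $E'$ spans a forest of $G$ whose $k$ components each contain exactly one vertex of $T=V(G)\setminus S$, and $\det N_S[E']=0$ otherwise. I would prove it by induction on the number of edges $n-k$: if $E'$ contains a cycle the corresponding columns of $N$ are linearly dependent, so the determinant is $0$; if $E'$ is a forest it has exactly $k$ components, and expanding along a leaf of $E'$ lying outside $T$ reduces the order, while a short rank computation shows the determinant vanishes precisely when $T$ fails to be a transversal of the component set of $(V(G),E')$ (some component misses $T$, equivalently some component meets $T$ at least twice). Granting this lemma, $\det L(G)[S]$ equals the number of spanning forests $F$ of $G$ with exactly $k$ components for which $T$ selects exactly one vertex from each component of $F$. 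Summing over all $k$‑subsets $T$, a fixed $F\in\mathfrak{F}_k$ with component orders $n_1,\dots,n_k$ is counted once for each way of choosing one vertex from each component, i.e. $\prod_{i=1}^k n_i=\gamma(F)$ times; hence $\xi_{n-k}=\sum_{F\in\mathfrak{F}_k}\gamma(F)$, as asserted. The hard part is the forest lemma — keeping the sign and rank bookkeeping clean so that each surviving Cauchy–Binet term contributes exactly $+1$ (never cancelling), and correctly isolating the ``one vertex of $T$ per component'' condition, since it is precisely that condition which turns the sum over deleted sets $T$ into the transversal count $\gamma(F)$.
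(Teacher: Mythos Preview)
Your argument is correct and is essentially the classical Kelmans--Chelnokov proof via Cauchy--Binet; the paper itself does not supply a proof of this theorem but merely quotes it from Biggs, so there is no in-paper argument to compare against. Your outline matches the standard textbook treatment (principal-minor expansion of the characteristic polynomial, factorisation $L=NN^{\top}$, Cauchy--Binet, and the totally unimodular forest lemma), and the final double-counting step giving the weight $\gamma(F)=\prod n_i$ is exactly right.
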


In particular, we have $\xi_{0}=1$, $\xi_{1}=2m$, $\xi_{n}=0$, and
$\xi_{n-1}=n\tau(G)$, in which $\tau(G)$ denotes the number of
spanning trees of $G$.

Let $G$ be a graph with $n$ vertices. It is convenient to adopt the
following terminology from \cite{fiedler}: for a vector
$X=(x_{1},\ldots,x_{n})^t\in \mathbb{R}^{n}$, we say $X$ gives a
valuation of the vertex of $V$, and with each vertex $v_{i}$ of $V$,
we associate the number $x_{i}$, which is the value of the vertex
$v_{i}$, that is $x(v_{i})=x_{i}$. Then $\mu$ is an eigenvalue of
$L(G)$ with the corresponding eigenvector $X=(x_{1},\ldots,x_{n})$
if and only if $X\neq 0$ and
\begin{equation}\label{one}
(d(v_{i})-\mu)x_{i}=\sum_{v_{j}\in N(v_{i})} x_{j}\,\,
\mathrm{for}\,\,\mathrm{all}\,\, i=1,\ldots,n.
\end{equation}

\indent It has been shown that if $T$ is a tree containing a perfect
matching, then $L(T)$ has an eigenvalue $2$ and
$\mu_{\alpha^{'}(T)}(T)=\mu_{n/2}(T)=2$, \cite[Theorem~2]{ming}. In
\cite[Theorem 2]{fan} the authors proved that, if $T$ is a tree with
a perfect matching, vector $X\neq 0$ is an eigenvector of $L(T)$
corresponding to the eigenvalue $2$ if and only if $X$ has exactly
two distinct entries $-t$ and $t$, for each edge $e=uv\in M$,
$x(u)=-x(v)$ and for each edge $e=uv\notin M$, $x(u)=x(v)$.


\section{The eigenvector of Laplacian eigenvalue $2$}
In the follow,  we study some results on broken sun graphs and
unicycle graphs. Then we establish the eigenvector of these types of
graphs that have Laplacian eigenvalue $2$.   First we see a theorem
from \cite{mohar}.
\begin{theorem}\label{eqution} \emph{\cite[Theorem 3.2]{mohar}}
Let $G$ be a graph on $n$ vertices and $e$ be an edge of $G$. Let
$\mu_{i}(G),\ (1\le i\le n)$ be the eigenvalue of $L(G)$. Then the
following holds: $$\mu_{1}(G)\geq \mu_{1}(G-e)\geq \mu_{2}(G)\geq
\mu_{2}(G-e)\geq\cdots \geq \mu_{n}(G)\geq \mu_{n}(G-e)$$
\end{theorem}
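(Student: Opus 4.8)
The plan is to realize $L(G)$ as a rank-one positive semidefinite perturbation of $L(G-e)$ and then read the interlacing off the Courant--Fischer min-max formulas. Write $e=uv$ and let $\mathbf{e}_u,\mathbf{e}_v$ be the standard basis vectors of $\mathbb{R}^n$ indexed by $V(G)$. Directly from $L=D-A$ one has $L(G)=L(G-e)+M_e$, where $M_e=(\mathbf{e}_u-\mathbf{e}_v)(\mathbf{e}_u-\mathbf{e}_v)^t$; equivalently, for every $X=(x_1,\dots,x_n)^t\in\mathbb{R}^n$,
\begin{equation}\label{rayleigh}
X^tL(G)X=X^tL(G-e)X+(x_u-x_v)^2 .
\end{equation}
The matrix $M_e$ is symmetric, positive semidefinite and of rank $1$, with eigenvalues $2$ (simple) and $0$ (with multiplicity $n-1$); its kernel is precisely the hyperplane $W=\{X\in\mathbb{R}^n:x_u=x_v\}$, and on $W$ the quadratic forms of $L(G)$ and $L(G-e)$ coincide. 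No connectivity hypothesis on $G$ or $G-e$ is needed anywhere.

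First I would prove $\mu_i(G)\ge\mu_i(G-e)$ for every $i$. By Courant--Fischer, $\mu_i(G-e)=\max_{\dim S=i}\min_{0\ne X\in S}X^tL(G-e)X/X^tX$; choosing an $i$-dimensional subspace $S$ attaining this maximum and invoking \eqref{rayleigh}, every nonzero $X\in S$ satisfies $X^tL(G)X\ge X^tL(G-e)X$, so $\mu_i(G)\ge\min_{0\ne X\in S}X^tL(G)X/X^tX\ge\mu_i(G-e)$. For the interleaving inequalities $\mu_i(G-e)\ge\mu_{i+1}(G)$ I would use the dual form $\mu_{i+1}(G)=\min_{\dim T=n-i}\max_{0\ne X\in T}X^tL(G)X/X^tX$ together with the hyperplane $W$. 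Let $U$ be the span of an orthonormal set of eigenvectors of $L(G-e)$ belonging to $\mu_i(G-e),\mu_{i+1}(G-e),\dots,\mu_n(G-e)$, so that $\dim U=n-i+1$ and $X^tL(G-e)X\le\mu_i(G-e)\,X^tX$ for $X\in U$. Then $\dim(U\cap W)\ge(n-i+1)+(n-1)-n=n-i$, and for $X\in U\cap W$ identity \eqref{rayleigh} gives $X^tL(G)X=X^tL(G-e)X\le\mu_i(G-e)\,X^tX$; taking any $T\subseteq U\cap W$ with $\dim T=n-i$ then yields $\mu_{i+1}(G)\le\mu_i(G-e)$.

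Concatenating the two families of inequalities produces the asserted chain $\mu_1(G)\ge\mu_1(G-e)\ge\mu_2(G)\ge\cdots\ge\mu_n(G)\ge\mu_n(G-e)$, the last link being automatic in any case since $\mu_n(G)\ge0=\mu_n(G-e)$. The computations involved are routine; the only step that genuinely needs care is the second family of inequalities, where one must exploit not merely that $M_e$ is positive semidefinite but that it has rank $1$ — this is exactly what makes $\dim(U\cap W)\ge n-i$ and so pins the interlacing down to within a single index. Alternatively, the whole statement can be obtained in one line from Weyl's inequalities for the eigenvalues of the sum $L(G)=L(G-e)+M_e$, using that $M_e$ has a single nonzero eigenvalue; I would mention this as a shortcut but present the Courant--Fischer version as the primary argument, since it makes transparent the role of the hyperplane $\{x_u=x_v\}$, which is also the natural object when one later tracks the eigenvalue $2$ and its multiplicity under edge operations.
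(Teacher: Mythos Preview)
The paper does not prove this statement at all; it is quoted as \cite[Theorem~3.2]{mohar} and used as a black box. Your argument is correct and is essentially the standard proof of this edge-interlacing result: one writes $L(G)=L(G-e)+(\mathbf{e}_u-\mathbf{e}_v)(\mathbf{e}_u-\mathbf{e}_v)^t$ and applies either Courant--Fischer or Weyl's inequalities for a rank-one positive semidefinite perturbation. Both families of inequalities are handled cleanly, and you correctly identify that the nontrivial half $\mu_i(G-e)\ge\mu_{i+1}(G)$ genuinely requires $\operatorname{rank}(M_e)=1$, via the dimension count $\dim(U\cap W)\ge n-i$ for the hyperplane $W=\{x_u=x_v\}=\ker M_e$. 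Since the paper offers no proof to compare against, there is nothing further to contrast; your write-up would serve perfectly well as a self-contained justification should one be desired.
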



The following remark remember us some notes which are useful for the
next results.

\begin{remark}\label{treecycle}
Let $T$ be a tree of order $n\geq 3$ with a perfect matching, and
let $X$ be an eigenvector of $T$ corresponding to the eigenvalue
$2$. Then by \cite[Theorem~2]{fan}, $T$ has $\frac{n}{2}$ vertices
with value $1$ and $\frac{n}{2}$ vertices with value $-1$ given by
$X$. Let $V_1$ and $V_2$ be the sets of the former $\frac{n}{2}$ and
the latter $\frac{n}{2}$ vertices, respectively. By
\cite[Theorem~3.1]{merris}, if we add edges between any two
non-incident vertices in $V_1$ or $V_2$, then $2$ is also the
eigenvalue of the result graph. Hence, if $u$ and $v$ belong to
$V_1$ (or $V_2$) , then $L(G=T\cup \{uv\})$ has an eigenvalue $2$
and $X=(x_{1},\ldots ,x_{n})^{t}$ is an eigenvector of $L(G)$
corresponding to the eigenvalue $2$ where $x_{i}\in \{-1,1\}$.
\end{remark}



\begin{theorem}\label{seigenv}
Let $G$ be a broken sun graph containing a perfect matching which
has a Laplacian eigenvalue $2$. Then there exists an eigenvector
corresponding to the eigenvalue $2$ such as $X=(x_{1},\ldots
,x_{n})^t$ such that $x_{i}\in \{-r,r\}$.
\end{theorem}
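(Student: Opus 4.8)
The plan is to exploit the extremely rigid shape of a broken sun graph. First I would record the structure: since $G$ is a unicyclic subgraph of a sun graph, it is a cycle $C_g$ on vertices $u_1,\dots,u_g$ with one pendant vertex $w_i$ attached to $u_i$ for each $i$ in some set $S\subseteq\{1,\dots,g\}$. A perfect matching $M$ of $G$ must contain $u_iw_i$ for every $i\in S$, so the $S$-free cycle vertices get matched to each other along the cycle; since they split into arcs and a path carries a perfect matching only when its order is even, every maximal arc of $S$-free cycle vertices has even length. In particular $g-|S|$ is even, and the number $c:=|M\cap E(C_g)|$ of matched cycle edges equals $(g-|S|)/2$.

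Next I would turn the eigenvalue-$2$ equation~(\ref{one}) into a recurrence around the cycle. With $\mu=2$ it forces $x(w_i)=-x(u_i)$ at each pendant; writing $a_i=x(u_i)$ and substituting, the equation at $u_i$ becomes $a_{i+1}=-a_{i-1}$ if $i\notin S$ and $a_{i+1}=2a_i-a_{i-1}$ if $i\in S$. Encoding $\binom{a_{i+1}}{a_i}=A_i\binom{a_i}{a_{i-1}}$, where $A_i=R:=\left(\begin{smallmatrix}0&-1\\1&0\end{smallmatrix}\right)$ for $i\notin S$ and $A_i=P:=\left(\begin{smallmatrix}2&-1\\1&0\end{smallmatrix}\right)$ for $i\in S$, one sees that $L(G)$ has eigenvalue $2$ precisely when the monodromy $M:=A_g\cdots A_1$ has a nonzero fixed vector $v=\binom{a_1}{a_g}$, and that any such $v$ propagates to a genuine eigenvector $X=(x_1,\dots,x_n)^t$. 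Now $R^2=-I$, so each even arc of $S$-free vertices contributes a scalar $\pm I$ to the product; cutting the cycle at a vertex of $S$ (when $S\neq\emptyset$) this collapses to $M=(-1)^cP^{|S|}$, and to $M=(-1)^cI$ when $S=\emptyset$. As $P$ is unipotent, $M$ fixes a nonzero vector iff $c$ is even; since $G$ is assumed to have the eigenvalue $2$, it follows that $c$ is even and $M=P^{|S|}$ (resp.\ $M=I$).

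Finally I would build the eigenvector by seeding the recurrence with the fixed vector $v=\binom{1}{1}$ of $M$ (it spans the $1$-eigenspace of $P$, and $I$ fixes everything) and propagating it around the cycle. The key point is that $R$ permutes the four vectors with entries in $\{-1,1\}$ in one $4$-cycle, while $P$ fixes $\binom{1}{1}$ and $\binom{-1}{-1}$; and between consecutive applications of $P$ one applies $R$ an even number of times (an arc length), so that whenever the next vertex lies in $S$ the running vector $\binom{a_i}{a_{i-1}}$ is one of $\binom{1}{1},\binom{-1}{-1}$ (which $P$ fixes), and in every case it has entries in $\{-1,1\}$. Hence every $a_i\in\{-1,1\}$ and every $x(w_i)=-a_i\in\{-1,1\}$, so the resulting $X$ is an eigenvector of $L(G)$ for $2$ all of whose entries lie in $\{-1,1\}$, i.e.\ of the required form with $r=1$. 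As an alternative ending, once $c$ is even one may delete a cycle edge $e=u_pu_{p+1}\notin M$: then $G-e$ is a spanning tree with a perfect matching, its $\pm$-eigenvector for $2$ from \cite[Theorem~2]{fan} takes the same value at $u_p$ and $u_{p+1}$ because the $G$-cycle carries an even number $c$ of matching edges, and Remark~\ref{treecycle} returns exactly the desired eigenvector.

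The step I expect to be the real obstacle is the middle one: establishing that $c$ must be even, equivalently that a broken sun graph with a perfect matching but an \emph{odd} number of matched cycle edges cannot have $2$ in its Laplacian spectrum. This parity fact is precisely what rules out $C_6,C_{10},\dots$ and makes the hypothesis "$G$ has eigenvalue $2$" do real work, and it seems to genuinely require the transfer-matrix (or an equivalent explicit recurrence) computation; everything else is bookkeeping about how $R$ and $P$ move sign patterns around.
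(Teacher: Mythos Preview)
Your argument is correct, and it proceeds quite differently from the paper's. The paper proves the theorem by induction on the girth $g$: for $3\le g\le 6$ it enumerates all broken sun graphs with a perfect matching and eigenvalue $2$, removes a non-matching cycle edge, invokes \cite[Theorem~2]{fan} on the resulting tree, and then appeals to Remark~\ref{treecycle}; for $g\ge 7$ it finds two adjacent pairs of degree-$2$ cycle vertices, contracts them to drop the girth by $4$, and explicitly lifts the inductive eigenvector. Your transfer-matrix approach avoids induction entirely: the monodromy $M=A_g\cdots A_1$ collapses to $(-1)^cP^{|S|}$ (or $(-1)^cI$ when $S=\emptyset$), which not only identifies exactly when eigenvalue $2$ occurs (the parity condition $c\equiv 0$, i.e.\ $n_2(G)\equiv 0\ (\mathrm{mod}\ 4)$) but also hands you the $\{-1,1\}$-valued eigenvector by propagating the seed $\binom{1}{1}$. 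This is more conceptual and yields the characterization as a byproduct; the paper's route is more combinatorial and relies on case analysis for the base, though its use of Remark~\ref{treecycle} is exactly your ``alternative ending''. One minor point worth stating explicitly in a write-up: the reduction $M=(-1)^cP^{|S|}$ uses a re-indexing with $1\in S$, and you should record separately (as you do) the case $S=\emptyset$, where the seed $\binom{1}{1}$ is still fixed by $M=I$ and the $R$-orbit stays in $\{-1,1\}^2$.
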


\begin{proof}
By induction on $g$ and using Remark \ref{treecycle}, we prove that
$x(u)=x(v)$ where $X$ is an eigenvector of $L(G)$ corresponding to
the eigenvalue $2$. Assume that $M$ is a perfect matching in $G$.
The following figures ($1-4$) for all broken sun graphs with $3\leq
g\leq 6$, that containing a perfect matching and having Laplacian
eigenvalue $2$, for each arbitrary edge $e=uv\notin M$, by removing
$e$, we have a tree $T=G-e$ with a perfect matching. Thus assume
that $X=(x_{1},\ldots ,x_{n})^t$ be an eigenvector of $L(T)$
corresponding to the eigenvalue $2$ such that $x_{i}\in \{-r,r\}$,
by \cite[Theorem~2]{fan} and $x_T(u)=x_T(v)$. Also $X$ is an
eigenvector of $L(G)$ corresponding to the eigenvalue $2$, by
\ref{treecycle}.

\begin{figure}[!ht]
  \[\begin{tikzpicture}[scale=.6,thin]
 \vertex[fill] (v1) at (3,2)[] {};
 \vertex[fill] (v2) at (3,3) [] {};
 \vertex[fill] (v3) at (2,1) [label=south:$u$] {};
 \vertex[fill] (v4) at (4,1)[label=south:$v$] {};
 \vertex[fill] (v5) at (1,1) [] {};
 \vertex[fill](v6) at (5,1) [] {};
 \vertex[fill](v7) at (9,2) [label=right:$r$] {};
 \vertex[fill](v8) at (9,3) [label=right:$-r$] {};
 \vertex[fill](v9) at (8,1)[label=south:$r$] {};
 \vertex[fill](v10) at (10,1) [label=south:$r$] {};
 \vertex[fill](v11) at (7,1) [label=south:$-r$] {};
 \vertex[fill](v12) at (11,1) [label=south:$-r$] {};

 \path
 (v3) edge (v4)
 (v4) edge (v1)
 (v1) edge (v3)
 (v10) edge (v7)
 (v9) edge (v7);

 \draw[line width=2pt](1,1)  .. controls (1.5,1) ..(2,1);
 \draw[line width=2pt](3,2)  .. controls (3,2.5) ..(3,3);
 \draw[line width=2pt](4,1)  .. controls (4.5,1) ..(5,1);
 \draw[line width=2pt](9,2)  .. controls (9,2.5) ..(9,3);
 \draw[line width=2pt](7,1)  .. controls (7.5,1) ..(8,1);
 \draw[line width=2pt](10,1)  .. controls (10.5,1) ..(11,1);
 \draw[line width=2pt] (6,1.5).. controls(6,1.5)..node[pos=0.2,anchor=south]{$\Longrightarrow$}(6,1.5);
\end{tikzpicture} \]
\caption{g=3, $x_T(u)=x_T(v)$}
\end{figure}
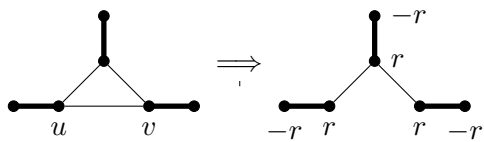

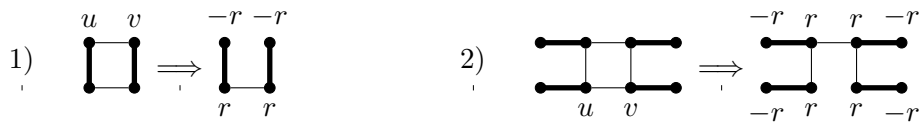
\begin{figure}[!ht]
  \[\begin{tikzpicture}[scale=.6,thin]
 \vertex[fill] (v1) at (-5,1)[] {};
 \vertex[fill] (v2) at (-4,1) [] {};
 \vertex[fill] (v3) at (-4,2) [label=north:$v$] {};
 \vertex[fill] (v4) at (-5,2)[label=north:$u$] {};
 \vertex[fill] (v5) at (-2,1) [label=south:$r$] {};
 \vertex[fill](v6) at (-1,1) [label=south:$r$] {};
 \vertex[fill](v7) at (-1,2) [label=north:$-r$] {};
 \vertex[fill](v8) at (-2,2) [label=north:$-r$] {};

 \vertex[fill](v9) at (5,2)[] {};
 \vertex[fill](v10) at (6,2) [] {};
 \vertex[fill](v11) at (6,1) [label=south:$u$] {};
 \vertex[fill](v12) at (7,2) [] {};
 \vertex[fill](v13) at (5,1) [] {};
 \vertex[fill](v14) at (7,1) [label=south:$v$] {};
 \vertex[fill](v15) at (8,1) [] {};
 \vertex[fill](v16) at (8,2) [] {};
 \vertex[fill](v17) at (10,2) [label=north:$-r$] {};
 \vertex[fill](v18) at (11,2) [label=north:$r$] {};
 \vertex[fill](v19) at (11,1) [label=south:$r$] {};
 \vertex[fill](v20) at (10,1) [label=south:$-r$] {};
 \vertex[fill](v21) at (12,1) [label=south:$r$] {};
 \vertex[fill](v22) at (13,1) [label=south:$-r$] {};
 \vertex[fill](v23) at (12,2) [label=north:$r$] {};
 \vertex[fill](v24) at (13,2) [label=north:$-r$] {};

 \path
 (v1) edge (v2)
 (v3) edge (v4)
 (v5) edge (v6)
 (v10) edge (v11)
 (v11) edge (v14)
 (v14) edge (v12)
 (v10) edge (v12)
 (v19) edge (v18)
 (v23) edge (v18)
 (v23) edge (v21);

   \draw[line width=2pt] (-4,2) .. controls (-4,1.5) ..(-4,1);
   \draw[line width=2pt] (-5,1) .. controls (-5,1.5) ..(-5,2);
   \draw[line width=2pt] (-1,2) .. controls (-1,1.5) ..(-1,1);
   \draw[line width=2pt] (-2,1) .. controls (-2,1.5) ..(-2,2);

   \draw[line width=2pt] (5,2) .. controls (5.5,2) ..(6,2);
   \draw[line width=2pt] (5,1) .. controls (5.5,1) ..(6,1);
   \draw[line width=2pt] (7,1) .. controls (7.5,1) ..(8,1);
   \draw[line width=2pt] (7,2) .. controls (7.5,2) ..(8,2);

   \draw[line width=2pt] (10,2).. controls (10.5,2) ..(11,2);
   \draw[line width=2pt] (10,1) .. controls (10.5,1) ..(11,1);
   \draw[line width=2pt] (12,1) .. controls (12.5,1) ..(13,1);
   \draw[line width=2pt] (12,2) .. controls (12.5,2) ..(13,2);
   \draw[line width=2pt] (-6.5,1) ..controls(-6.5,1)..node[pos=0.2,anchor=south]{$1)$}(-6.5,1);

  \draw[line width=2pt] (3.5,1) ..controls(3.5,1)..node[pos=0.2,anchor=south]{$2)$}(3.5,1);

  \draw[line width=2pt] (-3,1).. controls(-3,1)..node[pos=0.2,anchor=south]{$\Longrightarrow$}(-3,1);
  \draw[line width=2pt] (9,1)..controls(9,1)..node[pos=0.2,anchor=south]{$\Longrightarrow$}(9,1);
\end{tikzpicture} \]
\caption{g=4, $x_T(u)=x_T(v)$}
\end{figure}

\begin{figure}[!ht]
  \[\begin{tikzpicture}[scale=.5,thin]
 \vertex[fill] (v1) at (-5.5,0)[] {};
 \vertex[fill] (v2) at (-5,-1) [label=south:$v'$] {};
 \vertex[fill] (v3) at (-3,-1)[label=south:$u'$] {};
 \vertex[fill] (v4) at(-2.5,0)[label=north:$v$] {};
 \vertex[fill] (v5) at (-4,1) [label=right:$u$] {};
 \vertex[fill](v6) at (-4,2) [] {};
 \vertex[fill](v7) at (1.5,2) [label=north:$-r$] {};
 \vertex[fill](v8) at (2,1) [label=left:$r$] {};
 \vertex[fill](v9) at (4,1) [label=right:$r$] {};
 \vertex[fill](v10) at (4.5,2) [label=right:$-r$] {};
 \vertex[fill](v11) at (3,3) [label=right:$-r$] {};
 \vertex[fill](v12) at (3,4) [label=right:$r$] {};
 \vertex[fill](v13) at (1.5,-2) [label=north:$r$] {};
 \vertex[fill](v14) at (2,-3) [label=left:$-r$] {};
 \vertex[fill](v15) at (4,-3) [label=right:$-r$] {};
 \vertex[fill](v16) at (4.5,-2) [label=right:$r$] {};
 \vertex[fill](v17) at (3,-1) [label=right:$r$] {};
 \vertex[fill](v18) at (3,0) [label=right:$-r$] {};
 \vertex[fill](v19) at (8.5,0) [] {};
 \vertex[fill](v20) at (9,-1) [] {};
 \vertex[fill](v21) at (11,-1) [] {};
 \vertex[fill](v22) at (11.5,0) [label=north:$v$]  {};
 \vertex[fill](v23) at (10,1) [label=right:$u$] {};
 \vertex[fill](v24) at (7.5,0.5) [] {};
 \vertex[fill](v25) at (9,-2) [] {};
 \vertex[fill](v26) at (11,-2) [] {};
 \vertex[fill](v27) at (12.5,0.5) [] {};
 \vertex[fill](v28) at (10,2) [] {};
 \vertex[fill](v29) at (14.5,0) [label=north:$r$] {};
 \vertex[fill](v30) at (15,-1) [label=left:$r$] {};
 \vertex[fill](v31) at (17,-1) [label=right:$r$] {};
 \vertex[fill](v32) at (17.5,0) [label=north:$r$] {};
 \vertex[fill](v33) at (16,1) [label=right:$r$] {};
 \vertex[fill](v34) at (13.5,0.5) [label=north:$-r$] {};
 \vertex[fill](v35) at (15,-2) [label=left:$-r$] {};
 \vertex[fill](v36) at (17,-2) [label=right:$-r$] {};
 \vertex[fill](v37) at (18.5,0.5) [label=north:$-r$] {};
 \vertex[fill](v38) at (16,2) [label=right:$-r$] {};

 \path
 (v2) edge (v3)
 (v4) edge (v5)
 (v5) edge (v1)
 (v8) edge (v9)
 (v11) edge (v7)
 (v16) edge (v17)
 (v17) edge (v13)
 (v19) edge (v20)
 (v20) edge (v21)
 (v21) edge (v22)
 (v22) edge (v23)
 (v23) edge (v19)
 (v29) edge (v30)
 (v30) edge (v31)
 (v31) edge (v32)
 (v33) edge (v29);

   \draw[line width=2pt] (-5.5,0).. controls (-5.25,-0.5) ..(-5,-1);
   \draw[line width=2pt] (-3,-1).. controls (-2.75,-0.5) ..(-2.5,0);
   \draw[line width=2pt] (-4,1).. controls (-4,1.5) ..(-4,2);
   \draw[line width=2pt] (1.5,2).. controls (1.75,1.5) ..(2,1);
   \draw[line width=2pt] (4,1) .. controls (4.25,1.5) ..(4.5,2);
   \draw[line width=2pt] (3,3).. controls (3,3.5) ..(3,4);
   \draw[line width=2pt] (1.5,-2) .. controls (1.75,-2.5) ..(2,-3);
   \draw[line width=2pt] (4,-3) .. controls (4.25,-2.5) ..(4.5,-2);
   \draw[line width=2pt] (3,-1).. controls (3,-0.5) ..(3,0);
   \draw[line width=2pt] (8.5,0)  .. controls (8,0.25) ..(7.5,0.5);
   \draw[line width=2pt] (10,1)  .. controls (10,1.5) ..(10,2);
   \draw[line width=2pt] (9,-1)  .. controls (9,-1.5) ..(9,-2);
   \draw[line width=2pt] (11,-1)  .. controls (11,-1.5) ..(11,-2);
   \draw[line width=2pt] (11.5,0)  .. controls (12,0.25) ..(12.5,0.5);
   \draw[line width=2pt] (16,2)  .. controls (16,1.5) ..(16,1);
   \draw[line width=2pt] (14.5,0)  .. controls (14,0.25) ..(13.5,0.5);
   \draw[line width=2pt] (15,-1)  .. controls (15,-1.5) ..(15,-2);
   \draw[line width=2pt] (17,-1)  .. controls (17,-1.5) ..(17,-2);
   \draw[line width=2pt] (17.5,0)  .. controls (18,0.25) ..(18.5,0.5);
 \draw[line width=2pt] (-0.5,1.5) ..controls(-0.5,1.5)..node[pos=0.2,anchor=south]{$i)$}(-0.5,1.5);

 \draw[line width=2pt] (-0.5,-2.5) .. controls (-0.5,-2.5)..node[pos=0.2,anchor=south]{$ii)$} (-0.5,-2.5);

 \draw[line width=2pt] (-6.5,-0.5) ..controls(-6.5,-0.5)..node[pos=0.2,anchor=south]{$1)$}(-6.5,-0.5);

 \draw[line width=2pt] (6.5,-0.5) ..controls(6.5,-0.5)..node[pos=0.2,anchor=south]{$2)$}(6.5,-0.5);

 \draw[line width=2pt] (-1.5,-0.5).. controls(-1.5,-0.5)..node[pos=0.2,anchor=south]{$\Longrightarrow$}(-1.5,-0.5);
 \draw[line width=2pt] (13,-0.5)..controls(13,-0.5)..node[pos=0.2,anchor=south]{$\Longrightarrow$}(13,-0.5);
\end{tikzpicture} \] \caption{g=5, $x_T(u)=x_T(v)$ and $ x_{T'}(u')=x_{T'}(v')$}
\end{figure}
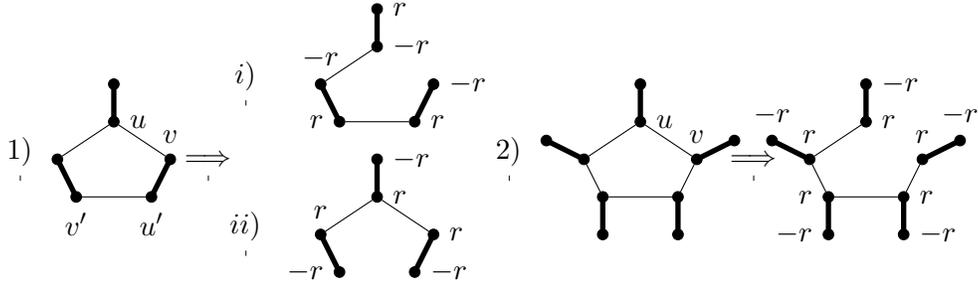

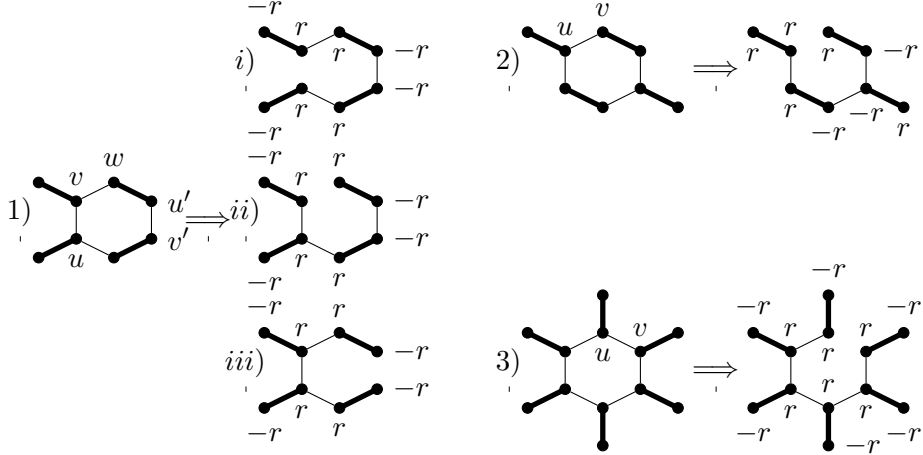
\begin{figure}[!ht]
  \[\begin{tikzpicture}[scale=.5,thin]
 \vertex[fill] (v1) at (-5,4)[label=north:$v$] {};
 \vertex[fill] (v2) at (-5,3) [label=south:$u$] {};
 \vertex[fill] (v3) at (-4,2.5) [] {};
 \vertex[fill] (v4) at(-3,3)[label=right:$v'$]  {};
 \vertex[fill] (v5) at (-3,4) [label=right:$u'$] {};
 \vertex[fill](v6) at (-4,4.5) [label=north:$w$] {};
 \vertex[fill](v7) at (-6,4.5) [] {};
 \vertex[fill](v8) at (-6,2.5) [] {};
 \vertex[fill](v9) at (3,8) [label=right:$-r$] {};
 \vertex[fill](v10) at (3,7) [label=right:$-r$] {};
 \vertex[fill](v11) at (1,7) [label=south:$r$] {};
 \vertex[fill](v12) at (2,6.5) [label=south:$r$] {};
 \vertex[fill](v13) at (2,8.5) [label=south:$r$] {};
 \vertex[fill](v14) at (1,8) [label=north:$r$] {};
 \vertex[fill](v15) at (0,8.5) [label=north:$-r$] {};
 \vertex[fill](v16) at (0,6.5) [label=south:$-r$] {};
 \vertex[fill](v17) at (1,4) [label=north:$r$] {};
 \vertex[fill](v18) at (1,3) [label=south:$r$] {};
 \vertex[fill](v19) at (2,2.5) [label=south:$r$] {};
 \vertex[fill](v20) at (3,3) [label=right:$-r$] {};
 \vertex[fill](v21) at (3,4) [label=right:$-r$] {};
 \vertex[fill](v22) at (2,4.5) [label=north:$r$] {};
 \vertex[fill](v23) at (0,4.5)[ label=north:$-r$] {};
 \vertex[fill](v24) at (0,2.5) [label=south:$-r$] {};
 \vertex[fill](v25) at (1,0) [label=north:$r$] {};
 \vertex[fill](v26) at (1,-1) [label=south:$r$] {};
 \vertex[fill](v27) at (2,-1.5) [label=south:$r$] {};
 \vertex[fill](v28) at (3,-1) [label=right:$-r$] {};
 \vertex[fill](v29) at (3,0) [label=right:$-r$] {};
 \vertex[fill](v30) at (2,0.5) [label=north:$r$] {};
 \vertex[fill](v31) at (0,0.5) [label=north:$-r$] {};
 \vertex[fill](v32) at (0,-1.5) [label=south:$-r$] {};
 \vertex[fill](v33) at (8,8) [label=north:$u$] {};
 \vertex[fill](v34) at (8,7) [] {};
 \vertex[fill](v35) at (9,6.5) [] {};
 \vertex[fill](v36) at (10,7) [] {};
 \vertex[fill](v37) at (10,8) [] {};
 \vertex[fill](v38) at (9,8.5) [label=north:$v$] {};
 \vertex[fill](v39) at (7,8.5) [] {};
 \vertex[fill](v40) at (11,6.5) [] {};
 \vertex[fill](v41) at (14,8) [label=north:$r$] {};
 \vertex[fill](v42) at (14,7) [label=south:$r$] {};
 \vertex[fill](v43) at (15,6.5) [label=south:$-r$] {};
 \vertex[fill](v44) at (16,7) [label=south:$-r$] {};
 \vertex[fill](v45) at (16,8) [label=right:$-r$] {};
 \vertex[fill](v46) at (15,8.5) [label=south:$r$] {};
 \vertex[fill](v47) at (13,8.5) [label=south:$r$] {};
 \vertex[fill](v48) at (17,6.5) [label=south:$r$] {};
 \vertex[fill](v49) at (8,0) [] {};
 \vertex[fill](v50) at (8,-1) [] {};
 \vertex[fill](v51) at (9,-1.5) [] {};
 \vertex[fill](v52) at (10,-1) [] {};
 \vertex[fill](v53) at (10,0) [label=north:$v$] {};
 \vertex[fill](v54) at (9,0.5) [label=south:$u$] {};
 \vertex[fill](v55) at (7,0.5) [] {};
 \vertex[fill](v56) at (7,-1.5) [] {};
 \vertex[fill](v57) at (9,-2.5) [] {};
 \vertex[fill](v58) at (11,-1.5) [] {};
 \vertex[fill](v59) at (11,0.5) [] {};
 \vertex[fill](v60) at (9,1.5) [] {};
 \vertex[fill](v61) at (14,0) [label=north:$r$] {};
 \vertex[fill](v62) at (14,-1) [label=south:$r$] {};
 \vertex[fill](v63) at (15,-1.5) [label=north:$r$] {};
 \vertex[fill](v64) at (16,-1) [label=south:$r$] {};
 \vertex[fill](v65) at (16,0) [label=north:$r$] {};
 \vertex[fill](v66) at (15,0.5) [label=south:$r$] {};
 \vertex[fill](v67) at (13,0.5) [label=north:$-r$] {};
 \vertex[fill](v68) at (13,-1.5) [label=south:$-r$] {};
 \vertex[fill](v69) at (15,-2.5) [label=right:$-r$] {};
 \vertex[fill](v70) at (17,-1.5) [label=south:$-r$] {};
 \vertex[fill](v71) at (17,0.5) [label=north:$-r$] {};
 \vertex[fill](v72) at (15,1.5) [label=north:$-r$] {};

 \path
 (v1) edge (v2)
 (v2) edge (v3)
 (v4) edge (v5)
 (v6) edge (v1)
 (v11) edge (v12)
 (v13) edge (v14)
 (v10) edge (v9)
 (v17) edge (v18)
 (v18) edge (v19)
 (v20) edge (v21)
 (v25) edge (v26)
 (v26) edge (v27)
 (v30) edge (v25)
 (v33) edge (v34)
 (v35) edge (v36)
 (v36) edge (v37)
 (v38) edge (v33)
 (v41) edge (v42)
 (v43) edge (v44)
 (v44) edge (v45)
 (v49) edge (v50)
 (v50) edge (v51)
 (v51) edge (v52)
 (v52) edge (v53)
 (v53) edge (v54)
 (v54) edge (v49)
 (v61) edge (v62)
 (v62) edge (v63)
 (v63) edge (v64)
 (v64) edge (v65)
 (v66) edge (v61);

 \draw[line width=2pt] (-6,2.5) .. controls (-5.5,2.75) .. (-5,3);
 \draw[line width=2pt] (-6,4.5) .. controls (-5.5,4.25) .. (-5,4);
 \draw[line width=2pt] (-4,2.5) .. controls (-3.5,2.75) .. (-3,3);
 \draw[line width=2pt] (-3,4) .. controls (-3.5,4.25) .. (-4,4.5);
 \draw[line width=2pt] (3,7) .. controls (2.5,6.75) .. (2,6.5);
 \draw[line width=2pt] (3,8) .. controls (2.5,8.25) .. (2,8.5);
 \draw[line width=2pt] (1,7) .. controls (0.5,6.75) .. (0,6.5);
 \draw[line width=2pt] (1,8) .. controls (0.5,8.25) .. (0,8.5);
 \draw[line width=2pt] (2,2.5) .. controls (2.5,2.75) .. (3,3);
 \draw[line width=2pt] (3,4) .. controls (2.5,4.25) .. (2,4.5);
 \draw[line width=2pt] (1,4) .. controls (0.5,4.25) .. (0,4.5);
 \draw[line width=2pt] (1,3) .. controls (0.5,2.75) .. (0,2.5);
 \draw[line width=2pt] (2,-1.5) .. controls (2.5,-1.25) .. (3,-1);
 \draw[line width=2pt] (3,0) .. controls (2.5,0.25) .. (2,0.5);
 \draw[line width=2pt] (1,0) .. controls (0.5,.25) .. (0,0.5);
 \draw[line width=2pt] (1,-1) .. controls (0.5,-1.25) .. (0,-1.5);
 \draw[line width=2pt] (8,7).. controls (8.5,6.75) .. (9,6.5);
 \draw[line width=2pt] (10,8) .. controls (9.5,8.25) .. (9,8.5);
 \draw[line width=2pt] (8,8).. controls (7.5,8.25) .. (7,8.5);
 \draw[line width=2pt] (10,7).. controls (10.5,6.75) .. (11,6.5);
 \draw[line width=2pt] (14,7) .. controls (14.5,6.75) .. (15,6.5);
 \draw[line width=2pt] (16,8).. controls (15.5,8.25) .. (15,8.5);
 \draw[line width=2pt] (14,8) .. controls (13.5,8.25) .. (13,8.5);
 \draw[line width=2pt] (16,7) .. controls (16.5,6.75) .. (17,6.5);
 \draw[line width=2pt] (8,0) .. controls (7.5,0.25) .. (7,0.5);
 \draw[line width=2pt] (8,-1)  .. controls (7.5,-1.25) .. (7,-1.5);
 \draw[line width=2pt] (9,-1.5) .. controls (9,-2) .. (9,-2.5);
 \draw[line width=2pt] (10,-1) .. controls (10.5,-1.25) .. (11,-1.5);
 \draw[line width=2pt] (10,0) .. controls (10.5,0.25) .. (11,0.5);
 \draw[line width=2pt] (9,0.5) .. controls (9,1) .. (9,1.5);
 \draw[line width=2pt] (14,0) .. controls (13.5,0.25) .. (13,0.5);
 \draw[line width=2pt] (14,-1) .. controls (13.5,-1.25) .. (13,-1.5);
 \draw[line width=2pt] (15,-1.5) .. controls (15,-2) .. (15,-2.5);
 \draw[line width=2pt] (16,-1) .. controls (16.5,-1.25) .. (17,-1.5);
 \draw[line width=2pt] (16,0)  .. controls (16.5,0.25) .. (17,0.5);
 \draw[line width=2pt] (15,0.5) .. controls (15,1) .. (15,1.5);
 \draw[line width=2pt] (-0.5,7) ..controls(-0.5,7)..node[pos=0.2,anchor=south]{$i)$}(-0.5,7);

 \draw[line width=2pt] (-0.5,3) .. controls (-0.5,3)..node[pos=0.2,anchor=south]{$ii)$} (-0.5,3);
 \draw[line width=2pt] (-0.5,-1) ..controls(-0.5,-1)..node[pos=0.2,anchor=south]{$iii)$}(-0.5,-1);

 \draw[line width=2pt] (-6.5,3) ..controls(-6.5,3)..node[pos=0.2,anchor=south]{$1)$}(-6.5,3);

 \draw[line width=2pt] (6.5,7) ..controls(6.5,7)..node[pos=0.2,anchor=south]{$2)$}(6.5,7);

 \draw[line width=2pt] (6.5,-1).. controls(6.5,-1)..node[pos=0.2,anchor=south]{$3)$}(6.5,-1);
 \draw[line width=2pt] (-1.5,3).. controls(-1.5,3)..node[pos=0.2,anchor=south]{$\Longrightarrow$}(-1.5,3);
 \draw[line width=2pt] (12,7)..controls(12,7)..node[pos=0.2,anchor=south]{$\Longrightarrow$}(12,7);
 \draw[line width=2pt] (12,-1).. controls(12,-1)..node[pos=0.2,anchor=south]{$\Longrightarrow$}(12,-1);
\end{tikzpicture} \]
\caption{g=6, $x_T(u)=x_T(v)$ , $x_{T'}(v)=x_{T'}(w)$ and
$x_{T''}(u')=x_{T''}(v')$  }
\end{figure}

Now assume that $g\geq 7$. We can find two pairs of
adjacent vertices of degree $2$ in $G$, because of $n_{2}(G)\geq 4$
and $G$ has a perfect matching. We suppose that ${u_{k},u_{k+1}}$
and ${u_{l},u_{l+1}}$ are these vertices. Suppose that $G^{'}$
obtained from $G$ by identifying three vertices
$u_{k-1},u_{k},u_{k+1}$ as one vertex $u_{k-1}$ and also by
identifying three vertices $u_{l},u_{l+1},u_{l+2}$ as one vertex
$u_{l+2}$, where $l\geq k+2$. Obviously, $G^{'}$ is a broken sun
graph with a perfect matching $M^{'}$ whose girth is $g-4$ and
$n_2(G')\equiv 0 (mod\,4)$. Thus, using induction hypothesis in
$G^{'}$ by removing $e=uv\notin M^{'}$, $x(u)=x(v)$. So $G^{'}$ has
an eigenvalue $2$ with the eigenvector $X=(x_{1},\ldots
,x_{k-1},x_{k+2},x_{k+3},\ldots ,x_{l-1},x_{l+2},\ldots
,x_{n})^{t}\in \mathbb {R}^{n-4}$ such that $x_{i}\in \{-r,r\}$. If
$l\geq k+3$, then we define the vector $Y=(y_{1},\ldots
,y_{n})^{t}\in \mathbb {R}^{n}$ as

$$y_{i}=
\begin{cases}
     x_{i}, & \mathrm{if}\,\, 1\leq i\leq k-1; \\
      x_{k+2}, & \mathrm{if}\,\, i=k; \\
-x_{k-1},& \mathrm{if}\,\,i=k+1; \\
-x_{i}, & \mathrm{if}\,\,k+2\leq i\leq l-1;\\
-x_{l+2}& \mathrm{if}\,\, i=l;\\
x_{l-1} & \mathrm{if}\,\,i=l+1;\\
x_{i}& \mathrm{if}\,\, l+2\leq i\leq n;
\end{cases}$$
also assign to each pendant vertex the negative value of its
neighbor. If $l=k+2$, then we define the vector $Y=(y_{1},\ldots
,y_{n})^{t}\in \mathbb {R}^{n}$ as
 $$y_{i}=
\begin{cases}
     x_{i}, & \mathrm{if}\,\, 1\leq i\leq k-1; \\
      x_{k+4}, & \mathrm{if}\,\, i=k; \\
-x_{k-1},& \mathrm{if}\,\,i=k+1; \\
-x_{k+4}, & \mathrm{if}\,\,i=k+2=l;\\
x_{k-1}& \mathrm{if}\,\, i=k+3=l+1;\\
x_{i}& \mathrm{if}\,\, k+4\leq i\leq n;
\end{cases}$$
also assign to each pendant vertex the negative value of its
neighbor. One may check that in both cases, the vector $Y$ satisfies
in Equation (\ref{one}). Therefore, $Y$ is an eigenvector of $L(G)$
corresponding to the eigenvalue $2$ such that $y_{i}\in \{-r,r\}$
and the proof is complete.
\end{proof}
\par In the follow, we wish to prove the corresponding of Theorem
\ref{seigenv} for any unicyclic graphs containing a perfect matching
for which the Theorem \ref{seigenv} plays as an induction basis.

\begin{theorem}\label{ueigenv}
Let $G=C(T_1,\ldots,T_g)$ be a unicyclic graph containing a perfect
matching which has Laplacian eigenvalue $2$. Then there exists the
eigenvector corresponding to the eigenvalue $2$ such as
$X=(x_1,\ldots ,x_n)^t$, such that $x_i\in \{-r,r\}$.
\end{theorem}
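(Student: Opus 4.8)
The plan is to induct on $n=|V(G)|$, with broken sun graphs as the base case via Theorem~\ref{seigenv}. If $G$ itself is a broken sun graph, then since it has a perfect matching and Laplacian eigenvalue $2$, Theorem~\ref{seigenv} already furnishes an eigenvector with entries in $\{-r,r\}$ and we are done. So assume $G=C(T_1,\ldots,T_g)$ is not a broken sun graph, i.e.\ some rooted tree $T_i$ is neither a single vertex nor a single pendant edge.

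The first thing I would establish is a structural lemma: such a $G$ always contains a pendant path $q'-q-w$ with $w$ a leaf of $G$ and $d_G(q)=2$. Indeed, $T_i$ must contain a vertex at distance $\ge 2$ from $v_i$, since otherwise $T_i$ is a star centred at $v_i$, and a perfect matching forbids two pendant leaves at $v_i$, forcing $T_i$ to be $K_1$ or $K_2$. Choosing a leaf $w$ of $G$ in $T_i$ at maximum distance from $v_i$ and letting $q$ be its neighbour, maximality makes every child of $q$ a leaf, and the perfect matching forbids two of these, so $q$ has exactly one child $w$ and exactly one other neighbour $q'$; hence $d_G(q)=2$, and $qw\in M$ because $w$ is pendant. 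Set $G'=G-\{q,w\}$: this is again a unicyclic graph (the cycle is untouched), with perfect matching $M'=M\setminus\{qw\}$ and $|V(G')|=n-2$.

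Next I would show $G'$ inherits the eigenvalue $2$ and then lift the reduced eigenvector. If $X$ is an eigenvector of $L(G)$ for $2$, Equation~(\ref{one}) at $w$ gives $x(q)=-x(w)$, and at $q$ gives $x(q')+x(w)=0$, so $x(q')=x(q)$; a short check then shows that $X$ restricted to $V(G')$ — which is nonzero, since otherwise Equation~(\ref{one}) at $q'$ and then at $w$ would force $X\equiv 0$ — satisfies Equation~(\ref{one}) for $L(G')$: the only equation that changes is the one at $q'$, and the decrease of $1$ in $d(q')$ is exactly cancelled by the disappearance of the term $x(q)=x(q')$. Hence $L(G')$ has eigenvalue $2$, and by the induction hypothesis it admits an eigenvector $X'=(x'_v)$ with $x'_v\in\{-r,r\}$. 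Define $Y$ on $V(G)$ by $y_v=x'_v$ for $v\in V(G')$, $y_q=x'_{q'}$, and $y_w=-x'_{q'}$. Reversing the computation above — the equations at all vertices of $G'$ are unchanged except at $q'$, where restoring the term $y_q=y_{q'}$ exactly accounts for the increase of $1$ in $d(q')$, while the equations at $q$ and $w$ hold by construction — shows that $Y$ is an eigenvector of $L(G)$ for $2$ with all entries in $\{-r,r\}$, completing the induction.

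I expect the main obstacle to be the structural lemma, namely proving that a unicyclic graph with a perfect matching that is not a broken sun graph must contain a removable pendant path whose interior vertex has degree $2$, together with the bookkeeping that guarantees the reduction stays inside the class ``unicyclic $+$ perfect matching $+$ eigenvalue $2$'' (the eigenvalue-preservation verification is the delicate but routine part). Everything else — in particular the base case, and the fact that repeated reduction terminates precisely at a broken sun graph — is supplied by Theorem~\ref{seigenv}, Remark~\ref{treecycle}, and \cite[Theorem~2]{fan}.
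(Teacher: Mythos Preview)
Your proof is correct and follows essentially the same route as the paper: induct on $n$, handle broken sun graphs via Theorem~\ref{seigenv}, otherwise locate a leaf $w$ whose neighbour $q$ has degree $2$, delete $\{q,w\}$, and lift the induced $\{-r,r\}$-valued eigenvector back by setting $y_q=x'_{q'}$, $y_w=-x'_{q'}$. The only difference is that the paper invokes \cite[Theorem~2.5]{grone} to conclude $m_{G\setminus\{q,w\}}(2)=m_G(2)$, whereas you verify directly that restricting an eigenvector of $L(G)$ yields a nonzero eigenvector of $L(G\setminus\{q,w\})$; both arguments are valid and the lifting step is identical.
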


\begin{proof}
First note that, for broken sun graphs, the proof is clear by
Theorem \ref{seigenv}. So, let $|V(T_i)|\geq 3$, for some $i$,
$1\leq i\leq g$. We prove the theorem by induction on $n=|V(G)|$.
 Let $d(u,v_i)=max_{x\in V(T_i)}d(x,v_i)$, where $v_i$
is the root of $T_i$. Since $G$ has a perfect matching, $u$ is a
pendant vertex and its neighbor, say $v$, has degree $2$. Thus
$G=(G\setminus \{u,v\})\odot S_2$, where $S_2$ is a star on $2$
vertices. $L(G\setminus \{u,v\})$ has an eigenvalue $2$ because
$m_G(2)=m_{(G\setminus \{u,v\})}(2)$, by \cite[Theorem 2.5]{grone}.
So, by induction hypothesis, $X=(x_1,\ldots ,x_{n-2})^t$ is the
eigenvector of $L(G\setminus \{u,v\})$ corresponding to the
eigenvalue $2$ such that $x_i\in \{-r,r\}$ for all $i=1,\ldots
,n-2$. Let $w\neq u$ be the vertex that is a neighbor of $v$.
$Z=(z_1,\ldots ,z_{n})^t=(X,x(w),-x(w))^t$ is an eigenvector of
$L(G)$ corresponding to the eigenvalue $2$, where $z_i\in \{-r,r\}$
for all $i=1,\ldots ,n$. This is because that
\begin{align*}
&(d_{G\setminus \{u,v\}}(w)-2)x_{G\setminus \{u,v\}}(w)=\sum_{{v_{i}}\in N_{G\setminus \{u,v\}}(w)}x(v_{i})\\
&(d_{G\setminus \{u,v\}}(w)-2)x_{G\setminus \{u,v\}}(w)+x(w)=\sum_{{v_{i}}\in N_{G\setminus \{u,v\}}(w)}x(v_{i})+x(w)\\
&(d_G(w)-2)z_G(w)=\sum_{{v_{i}}\in N_G(w)}z(v_{i}),\\
\end{align*}

and for vertex $v$

$$  \quad \quad d_G(v)=2\Rightarrow
\begin{cases}
(d_G(v)-2)z(u)=0  \\
\sum_{v_j\in N_G(v)}z(v_j)=z(w)+z(u)=x(w)-x(w)=0  \\
\end{cases}.$$

Also for the vertex $u$

$$d_G(u)=1\Rightarrow
\begin{cases}
(d_G(u)-2)z(u)=-z(u)=x(w)  \\
\sum_{v_j\in
N_G(u)}z(v_j)=z(v)=x(w)  \\
\end{cases}.$$

By noting the fact that $d_G(p)=d_{G\setminus \{u,v\}}(p)$ for the
other vertices of $G$, we have $$(d_G(v)-2)z_G(v)=\sum_{v_j\in
N_G(v)}z(v_j),\, \mathrm{for}\, \, \mathrm{all}\, v\in V(G),$$ and
the proof is complete.
\end{proof}

\section{Laplacian eigenvalue $2$ of bicyclic graphs}
In this section we study the multiplicity of Laplacian eigenvalue
$2$ of bicyclic graphs.
 Let $g_1$ and $g_2$ be the girth of cycles of $C_1$ and $C_2$ respectively in bicyclic graph $G$.

\begin{lemma}\label{eqibi}
Let $G$ be a bicyclic graph and $\mu> 1$ be an integral eigenvalue
of $L(G)$. Then $m_{G}(\mu)\leq 3$.
\end{lemma}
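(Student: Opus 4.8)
The plan is to control $m_G(\mu)$ by counting edges whose removal disconnects a cycle. Since $G$ is bicyclic, it has exactly $n+1$ edges and contains either two edge-disjoint cycles joined by a path (possibly of length $0$), or two cycles sharing a path ($\theta$-graph type). In either case one can choose two edges $e_1, e_2$ lying on the two independent cycles so that $G - e_1 - e_2$ is a forest (in fact, a spanning tree of $G$ minus one more edge, hence a forest with two components, or a tree if $e_1$ and $e_2$ are chosen to lie on a common cycle; the cleanest choice is $e_1$ on $C_1$ and $e_2$ on $C_2$ with $G-e_1$ unicyclic and $G-e_1-e_2$ a tree $T$). So first I would set up this choice of $e_1, e_2$ and record that $T = G - e_1 - e_2$ is a forest.

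The key tool is the interlacing inequality of Theorem \ref{eqution}: for any edge $e$, the Laplacian eigenvalues of $G$ and $G-e$ interlace, so $\mu_i(G) \ge \mu_i(G-e) \ge \mu_{i+1}(G)$ for all $i$. Iterating this twice, for $G \supset G - e_1 \supset G - e_1 - e_2 = T$ one gets $\mu_i(G) \ge \mu_{i+2}(T) \ge \mu_{i+4}(G)$ — more usefully, $\mu_{i+2}(T) \ge \mu_{i+4}(G)$ and $\mu_i(G)\ge \mu_{i+2}(T)$, which sandwich the eigenvalue multiplicities: if $\mu$ has multiplicity $m$ in $G$, occupying positions $j, j+1, \ldots, j+m-1$ in the ordered spectrum of $G$, then by interlacing $\mu$ must appear at least $m-2$ times in the spectrum of $G-e_1$, and hence at least $m-4$ times... that is too lossy. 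The right bookkeeping is: removing one edge changes each eigenvalue multiplicity by at most... no, by Theorem \ref{eqution} a single edge deletion can drop the multiplicity of a fixed value $\mu$ by at most $1$ and raise it by at most $1$. Hence $m_T(\mu) \ge m_G(\mu) - 2$. Now the crucial input is that a forest, in particular a tree, has every eigenvalue $\mu > 1$... wait, that is false in general; what is true and what I would invoke is that for a tree (and by summing over components, a forest) the multiplicity of any Laplacian eigenvalue $\mu \neq 0$ is at most... actually the sharp statement I want is $m_T(\mu) \le 1$ for any integer $\mu$ when $T$ is a \emph{path}, but for a general tree that also fails.

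So the real argument must be: a forest $F$ on $n$ vertices with $c$ components has $m_F(\mu) \le$ (something like the number of "branch vertices" plus one); but to get the clean bound $3$ I should instead use a known fact about trees — by \cite[Theorem~2]{fan}-type reasoning or Faria's inequality, $m_T(\mu) \le p(T) - q(T) + \ldots$ for non-integer... This is getting complicated, so let me commit to the cleanest route: I would cite/prove that for a tree $T$ and \emph{integer} eigenvalue $\mu > 1$ one has $m_T(\mu) \le 1$. The heuristic is that an eigenvector for such $\mu$ is determined, along any path from a leaf, by a linear recurrence with the first value forced (a leaf $u$ of degree $1$ satisfies $(1-\mu)x(u) = x(v)$, and two independent eigenvectors would have to agree up to scaling on every leaf-to-leaf path, forcing dependence) — this needs the integrality/$\mu>1$ hypothesis to rule out the value being "absorbed". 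Granting $m_T(\mu) \le 1$, combined with $m_T(\mu) \ge m_G(\mu) - 2$, we immediately get $m_G(\mu) \le 3$, which is exactly the claim.

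The main obstacle is precisely the lemma "$m_T(\mu) \le 1$ for a tree $T$ and integer $\mu > 1$": the interlacing part is routine, but this tree estimate is the substantive content and I expect the authors to either cite it from the literature (e.g. a result on Laplacian integral eigenvalues of trees, in the spirit of \cite{fan, ming}) or to prove it by a short eigenvector-recurrence argument starting from pendant vertices, using Equation (\ref{one}) and the fact that $d(v) - \mu$ is a nonzero integer at each vertex when $\mu$ is an integer exceeding $1$ except possibly at vertices of degree $\mu$, where a separate case analysis (the eigenvector vanishes on a neighbor) closes the argument. Once that is in hand, the deduction for bicyclic $G$ via two edge deletions and Theorem \ref{eqution} is immediate.
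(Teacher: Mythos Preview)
Your core mechanism---interlace down by edge deletion, then invoke a known multiplicity bound on the simpler graph---is exactly what the paper does. The paper's execution is just one step shorter: instead of removing two edges to reach a tree and appealing to the tree bound $m_T(\mu)\le 1$, it removes \emph{one} edge $e$ to obtain a unicyclic spanning subgraph $G'$, observes via Theorem~\ref{eqution} that $m_{G'}(\mu)\ge m_G(\mu)-1\ge 3$, and then cites \cite[Lemma~4]{akbari}, which says a unicyclic graph has $m_{G'}(\mu)\le 2$ for an integral Laplacian eigenvalue $\mu>1$. That lemma in \cite{akbari} is itself proved by exactly your move (remove one more edge, reduce to a tree, use \cite[Theorem~2.1]{grone}), so the two arguments are essentially identical once unpacked.

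Two remarks on your write-up. First, the tree fact you need---that a tree has any integer Laplacian eigenvalue $\mu>1$ with multiplicity at most $1$---is a theorem of Grone--Merris--Sunder \cite[Theorem~2.1]{grone}, already in the paper's bibliography; you should cite it rather than attempt an ad hoc recurrence argument (your sketch of that argument is incomplete, as you yourself note at the degree-$\mu$ vertices). Second, your detour through ``$m_F(\mu)\le$ branch vertices'' and Faria-type inequalities is unnecessary noise once you have the Grone--Merris--Sunder result; the proof is really just two lines of interlacing plus one citation.
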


\begin{proof}
By contrary, if $m_{G}(\mu)\geq 4$, then using Theorem
\ref{eqution}, for every unicyclic subgraph $G'$ of $G$ we have
$m_{G'}(\mu)\geq 3$. This contradicts \cite[Lamma~4]{akbari}
 and the result follows.
\end{proof}


\begin{theorem}\label{biodd}
Let $G$ be a bicyclic graph of odd order $n$. Then $m_{G}(2)\leq 2$.
In particular, if $g_{1},g_{2}\neq 0 (mod\,2)$ then $m_{G}(2)=0$.
\end{theorem}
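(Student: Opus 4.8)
The plan is to exploit the interlacing inequality of Theorem \ref{eqution} together with the counting formula of Theorem \ref{component} applied to the Laplacian coefficient $\xi_{n-1} = n\tau(G)$, combined with parity considerations. First I would handle the multiplicity bound $m_G(2)\le 2$: since $2>1$ is an integral eigenvalue of $L(G)$, Lemma \ref{eqibi} already gives $m_G(2)\le 3$, so it suffices to rule out $m_G(2)=3$ when $n$ is odd. For this, recall that every Laplacian has the eigenvalue $0$ (with eigenvector the all-ones vector), and the eigenvalues are real and nonnegative; I would look at the trace identity $\sum_i \mu_i(G) = \sum_v d(v) = 2|E(G)| = 2(n+1)$ for a bicyclic graph. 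If $m_G(2)=3$, then three eigenvalues equal $2$, one equals $0$, and the remaining $n-4$ eigenvalues sum to $2(n+1)-6 = 2n-4 = 2(n-2)$; this alone is not a contradiction, so I would instead pass to the determinant-type coefficient. The cleaner route is: $m_G(2)=3$ forces, by the interlacing of Theorem \ref{eqution}, that some unicyclic subgraph $G'$ has $m_{G'}(2)\ge 2$ and in fact a well-chosen one forces a tree (remove one of the two independent cycle edges) to carry $2$ with multiplicity $\ge 1$, which is fine — so the multiplicity part should instead follow directly from a parity/trace argument specific to $n$ odd, or simply be inherited: I would argue that $m_G(2)=3$ plus $m_G(0)=1$ accounts for four eigenvalues, and then use that the characteristic polynomial $L_G(x)$ has integer coefficients together with the fact that the product of the nonzero eigenvalues is $n\tau(G)$ (from $\xi_{n-1}=n\tau(G)$); if $2^3$ divides $n\tau(G)$ while $n$ is odd we would need $8\mid \tau(G)$, which is not yet a contradiction, so the trace route is the one I would actually push through — write $2n-4$ as a sum of $n-4$ positive reals that are algebraic integers closed under conjugation, which is always possible, so the real obstruction must come from elsewhere; hence I expect the intended argument for $m_G(2)\le 2$ is genuinely just Lemma \ref{eqibi} refined by one more application of interlacing to a unicyclic subgraph combined with \cite[Lemma~4]{akbari}, noting that for $n$ odd no unicyclic subgraph on $n$ vertices has $m_{G'}(2)=2$ with equality, because that would require structure incompatible with odd order.

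For the second, sharper claim — if $g_1,g_2$ are both odd then $m_G(2)=0$ — I would use the structure of a bicyclic graph together with Theorem \ref{component} applied to $\xi_{n-1}=n\tau(G)$ and a parity analysis of $\tau(G)$, but more directly I would evaluate $L_G(2) = \det(2I - L(G))$ and show it is odd (hence nonzero) under the hypothesis. The key observation is that $\det(2I-L(G)) \equiv \det(L(G)) \pmod 2$ since $2I\equiv 0$, and more usefully $\det(2I - L(G)) = \det(2I - D + A)$; working modulo a suitable prime or just directly, $2I-L(G) = 2I - D(G) + A(G)$, and for a bicyclic graph with both girths odd I would compute this determinant via the matrix-tree–type expansion or via the fact (Theorem \ref{component} with $k=0$) that $L_G(2)$ equals an alternating sum $\sum_{i}(-1)^i\xi_i 2^{n-i}$, then reduce modulo $2$: every term with $n-i\ge 1$ vanishes mod $2$, leaving $L_G(2)\equiv (-1)^n\xi_n = 0 \pmod 2$, which is the wrong parity — so instead I would reduce modulo a higher power of $2$, or better, factor out powers of $2$ and track the odd part. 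The honest approach: show directly that $2$ cannot be a root of $L_G(x)$ by contradiction using an eigenvector $X$ and Equation (\ref{one}). Suppose $LX = 2X$ with $X\ne 0$. On the tree parts hanging off the two cycles the local analysis of \cite[Theorem~2]{fan}–type forces sign-alternation patterns; propagating around an odd cycle $C_{g_i}$ produces a consistency equation that, because $g_i$ is odd, forces $X$ to vanish on that cycle, and then pushing outward kills $X$ on the attached trees, giving $X=0$, a contradiction.

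The main obstacle, and where I would spend the most care, is the propagation-around-an-odd-cycle step: I need to show that a $2$-eigenvector of $L(G)$ cannot be supported on (or nonzero near) an odd cycle. On a cycle $C_g$ with all vertices of degree $2$ inside $G$, Equation (\ref{one}) reads $x_{j-1}+x_{j+1} = (2-2)x_j = 0$ at interior-degree-$2$ vertices, giving $x_{j+1} = -x_{j-1}$, a period-$4$ pattern around the cycle; closing up a cycle of odd length with a period-$4$ alternation forces all these $x_j=0$, and at the (at most two) vertices of the cycle with higher degree — where trees or the connecting structure attach — the recurrence gets an extra term equal to the sum of off-cycle neighbor values, which I would handle by first showing (as in Remark \ref{treecycle} and Theorem \ref{seigenv}) that the tree contributions also force zeros. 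Assembling these local vanishing statements into a global $X=0$ is the crux; once that is done, both conclusions of the theorem follow, the multiplicity bound from Lemma \ref{eqibi} plus the impossibility of a second independent $2$-eigenvector when the parity obstruction is present on even one of the cycles.
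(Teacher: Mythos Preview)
Your proposal does not arrive at a complete argument for either claim, and it misses the key ingredients the paper uses.

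For the bound $m_G(2)\le 2$: the paper's route is exactly the interlacing chain you begin to sketch---assume $m_G(2)\ge 3$, delete a cycle edge to get a unicyclic $G'$ with $m_{G'}(2)\ge 2$, hence $m_{G'}(2)=2$ by \cite[Lemma~4]{akbari}, then delete another cycle edge to reach a spanning tree $T$ with $m_T(2)\ge 1$. You get this far and then write ``which is fine'', but that is precisely where the contradiction lives: by \cite[Theorem~2.1]{grone}, a tree whose Laplacian has eigenvalue $2$ must have even order, contradicting $n$ odd. Your subsequent detours through trace sums and divisibility of $n\tau(G)$ by $8$ do not close the gap and are not needed; the missing fact is simply that odd-order trees cannot have Laplacian eigenvalue $2$.

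For the second claim (both $g_i$ odd implies $m_G(2)=0$): the paper does not use eigenvector propagation at all. It argues via Theorem~\ref{component} and parity of the Laplacian coefficients: since $n$ is odd, every spanning forest with two components has $\gamma(F)$ even, so $\xi_{n-2}$ is even; substituting $x=2$ into $L_G(x)=\sum_i(-1)^i\xi_i x^{n-i}=0$ and reducing modulo $8$ then forces $4\mid \xi_{n-1}=n\tau(G)$, and with $n$ odd and $\tau(G)$ expressed through $g_1,g_2$ this contradicts both girths being odd. Your proposed eigenvector approach---propagating $x_{j+1}=-x_{j-1}$ around an odd cycle---works cleanly only when every cycle vertex has degree $2$ in $G$; as you yourself note, the vertices where trees or the connecting edge attach spoil the recurrence, and your appeal to Remark~\ref{treecycle} and Theorem~\ref{seigenv} is misplaced since those results concern graphs with perfect matchings and $\pm r$ eigenvectors, not the general odd-order situation here. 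The obstruction the paper exploits is arithmetic (coefficient parity), not local combinatorial propagation.
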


\begin{proof}
By contrary, suppose that $m_{G}(2)\geq 3$. Let $C_1$ and $C_2$ be
two cycles of $G$. Let $G'=G-{e}$, where $e\in E(C_1)$. Then $G'$ is
a unicycle graph. So $m_{G'}(2)\geq 2$, by Theorem \ref{eqution}. If
$m_{G'}(2)> 2$, this contradicts \cite[Lamma~4]{akbari}. Thus
$m_{G'}(2)= 2$. Let $T$ be a spanning tree of $G^{'}$. Therefore
$L(T)$ has an eigenvalue 2, by Theorem \ref{eqution}. By applying
\cite[Theorem 2.1]{grone}, we conclude that $2\mid n$, a
contradiction. Therefore, $m_{G}(2)\leq 2$. Moreover, Theorem
\ref{component} implies that $\xi_{n-2}=\sum_{F\in
\mathfrak{F}_{2}}\gamma(F)$. Since $n$ is odd, for each $F\in
\mathfrak{F}_{2}$, the value of $\gamma(F)$ is even. So, $\xi_{n-2}$
is an even number. Thus, if $L(G)$ has an eigenvalue $2$, then
$4\mid \xi_{n-1}=n\tau(G)= 2ng_1g_2$ and hence $2 \mid ng_{1}g_{2}$.
Therefore, $2\mid g_{1}g_{2}$, a contradiction, and the proof is
complete.
\end{proof}

\begin{remark}\label{perfect}
Let $G=G_1\odot G_2$ be a bicyclic graph such that $G_1$ and $G_2$
containing a perfect matching. It is obvious that $G$ has a perfect
matching.
\end{remark}

\begin{theorem}\label{unicyclicbi}
Let $G_1=C(T_1,\ldots ,T_{g_1})$ and $G_2=C(T'_1,\ldots
,T^{'}_{g_2})$ be unicyclic graphs containing a perfect matching.
Let the one-edge connected graph $G=G_1\odot G_2$ has a prefect
matching $M$ and $L(G)$ has an eigenvalue $2$. Then $G-e$ has a
Laplacian eigenvalue $2$ such that $e\in C_{g_1}$ or $e\in C_{g_2}$
and $e\notin M$.
\end{theorem}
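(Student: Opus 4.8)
The engine of the proof is a one-line consequence of the eigenvalue relation (\ref{one}): if $X$ is an eigenvector of $L(G)$ for the eigenvalue $2$ and $e=ab\in E(G)$, then $X$ is still an eigenvector of $L(G-e)$ for the eigenvalue $2$ exactly when $x(a)=x(b)$. So it suffices to produce an eigenvector $X$ of $L(G)$ for $2$ together with an edge $e\in(E(C_{g_1})\cup E(C_{g_2}))\setminus M$ whose two ends carry the same $X$-value. Since a matching inside a cycle $C_{g_j}$ uses fewer than $g_j$ edges, each cycle has an edge outside $M$; fix $e_1\in E(C_{g_1})\setminus M$ and $e_2\in E(C_{g_2})\setminus M$, and note that $C_{g_1}$ and $C_{g_2}$ are vertex-disjoint because $G=G_1\odot G_2$. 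If $m_G(2)\ge 2$, then Theorem \ref{eqution} gives $m_{G-e_1}(2)\ge m_G(2)-1\ge 1$, so $e=e_1$ already works; hence we may assume $m_G(2)=1$ and let $X$ be the essentially unique eigenvector for $2$.

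Now pass to $T_0:=G-e_1-e_2$. As $e_1$ and $e_2$ lie on vertex-disjoint cycles and the joining edge $e_0$ is the only edge between $G_1$ and $G_2$, deleting $e_1$ and $e_2$ keeps $G$ connected, so $T_0$ is a tree and $M\subseteq E(T_0)$ is a perfect matching of it. By \cite[Theorem~2]{ming} and \cite[Theorem~2]{fan}, $L(T_0)$ has the eigenvalue $2$ with an eigenvector $W$ taking the two values $\pm t$, with $w(a)=-w(b)$ on every edge of $M$ and $w(a)=w(b)$ on every other edge of $T_0$. Writing $e_j=a_jb_j$, the unique $a_j$--$b_j$ path in $T_0$ is precisely $C_{g_j}-e_j$ (detouring into an attached tree is a dead end, and detouring through the bridge $e_0$ cannot return), this path has $g_j-1$ edges of which exactly $|M\cap E(C_{g_j})|$ lie in $M$, and $W$ changes sign across each of those and only those; hence $w(a_j)=w(b_j)$ if and only if $|M\cap E(C_{g_j})|$ is even. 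Consequently, if $|M\cap E(C_{g_1})|$ is even, then $a_1$ and $b_1$ lie in the same one of the two sign classes of $W$ and are non-adjacent in $T_0$, so by Remark \ref{treecycle} (that is, \cite[Theorem~3.1]{merris}) $L(T_0+e_1)=L(G-e_2)$ has the eigenvalue $2$, and $e_2\in E(C_{g_2})\setminus M$ is the desired edge; the case $|M\cap E(C_{g_2})|$ even is symmetric and finishes with $e_1$.

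The remaining, genuinely delicate case is when $|M\cap E(C_{g_1})|$ and $|M\cap E(C_{g_2})|$ are both odd: then $W$ survives neither edge addition, so one is forced to use the hypothesis that $L(G)$ really does have the eigenvalue $2$. Writing $L(G)=L(T_0)+L_{e_1}+L_{e_2}$, where $L_e$ denotes the rank-one Laplacian of the single edge $e$, and pairing the resulting identity $(L(T_0)-2I)X=-L_{e_1}X-L_{e_2}X$ with $W$, one obtains only the single scalar relation $(x(a_1)-x(b_1))(w(a_1)-w(b_1))+(x(a_2)-x(b_2))(w(a_2)-w(b_2))=0$, and since both $w(a_j)-w(b_j)\neq 0$ here this does not by itself force $x(a_1)=x(b_1)$ or $x(a_2)=x(b_2)$. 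This is where the main obstacle lies, and I would overcome it by one of the following: (i) show that for a bicyclic graph with a perfect matching ``$|M\cap E(C_{g_1})|$ and $|M\cap E(C_{g_2})|$ both odd'' is incompatible with $2$ being a Laplacian eigenvalue, by a parity argument on the coefficients $\xi_{n-k}$ via Theorem \ref{component}, in the spirit of Theorem \ref{biodd}; (ii) induct on $n$ by deleting a pendant vertex together with its degree-$2$ support, which leaves both cycles and $m_{\cdot}(2)$ unchanged by \cite[Theorem~2.5]{grone} exactly as in the proof of Theorem \ref{ueigenv}, reducing to the pendant-free base case $G=C_{g_1}\odot C_{g_2}$ which can then be treated directly using Theorems \ref{seigenv} and \ref{ueigenv}; or (iii) when $m_{T_0}(2)\ge 2$, choose $e_1,e_2$ so that the $2\times 2$ matrix of $\{a_j,b_j\}$-differences of two independent eigenvectors of $L(T_0)$ for $2$ is invertible, which forces $x(a_1)=x(b_1)$ and $x(a_2)=x(b_2)$ simultaneously and yields the conclusion at once.
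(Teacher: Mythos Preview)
Your argument is sound through the case split, but the ``both odd'' case is left open: you list three possible fixes without executing any of them, and each has a real obstruction. For (iii), recall that an integer Laplacian eigenvalue of a tree is simple by \cite[Theorem~2.1]{grone}, so $m_{T_0}(2)=1$ and there is no second independent eigenvector of $L(T_0)$ to pair against. For (ii), peeling a pendant together with its degree-$2$ support preserves both cycles and the parities of $|M\cap E(C_{g_j})|$, so the induction lands in the same ``both odd'' configuration in the base case $C_{g_1}\odot C_{g_2}$; Theorems~\ref{seigenv} and \ref{ueigenv} say nothing about that base case because they only produce a $\pm r$ eigenvector once you already know $2$ is an eigenvalue. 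For (i), the coefficients $\xi_{n-1}$ and $\xi_{n-2}$ do not carry enough parity information in a bicyclic graph with even $n$ to exclude the configuration. So as written the proof is incomplete.

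The paper's route is entirely different and bypasses the parity split: it never looks at eigenvector entries. Fix $e\in E(C_{g_1})\setminus M$, $e'\in E(C_{g_2})\setminus M$, and let $T_0=G-e-e'$; by \cite[Theorem~2]{ming} one has $\mu_{k'}(T_0)=2$ with $k'=n/2$, and this is the \emph{only} index at which $T_0$ has eigenvalue $2$ by \cite[Theorem~2.1]{grone}. Now if $\mu_k(G)=2$, two applications of Theorem~\ref{eqution} give
\[
2=\mu_k(G)\ \ge\ \mu_k(G-e)\ \ge\ \mu_k(T_0),
\]
and one compares the index $k$ with $k'$: from $\mu_k(T_0)\le 2$ and simplicity one gets $k\ge k'$, while the hypothesis $k>k'$ is argued to force a second copy of $2$ in the spectrum of $T_0$ via the interlacing inequalities $\mu_{k-1}(G-e)\ge\mu_k(G)=2$, contradicting simplicity. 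Hence $k=k'$, and then the chain above together with $\mu_{k'}(T_0)=2$ squeezes $\mu_k(G-e)=2$. The missing idea in your attempt is precisely this index-location argument: once you know $2$ sits at position $n/2$ in $T_0$ and is simple there, interlacing alone pins $\mu_{n/2}(G-e)=2$ without any eigenvector computation or parity case analysis.
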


 \begin{proof}
Let $|V(G_1)|=n_1$ and $|V(G_2)|=n_2$. Without lose of generality,
we can assume that $e\in C_{g_1}$ and $\mu_k(G)=2$. So by Theorem
\ref{eqution}, we have, $$\mu_{k-1}(G)\geq \mu_{k-1}(G-e)\geq
\mu_{k}(G)\geq \mu_{k}(G-e)\Longrightarrow 2\geq \mu_{k}(G-e).$$
Now, let $e'\in C_{g_{2}}$. Then by Theorem \ref{eqution}, we have,
$$2\geq \mu_{k}(G-e)\geq \mu_{k}(G-e-e').$$ Assume that
$n=n_1+n_2=2k'$. Since $G-e-e'$ has a perfect matching,
$\mu_{k'}(G-e-e')=2$ and hence $2=\mu_{k'}(G-e-e')\geq
\mu_k(G-e-e')$ and $k\geq k'$ by Theorem \ref{eqution}. If $k=k'$,
then $2\geq \mu_k(G-e)\geq 2$ so $\mu_k(G-e)=2$ and the proof is
complete. On the other hand, if $k> k'$, then $\mu_{k}(G-e-e')\neq
2$. So we have, $$2=\mu_{k'}(G-e-e')\geq \mu_{k'+1}(G-e-e')\geq
\mu_{k-1}(G-e)\geq \mu_{k}(G)=2$$ and therefore
$\mu_{k'+1}(G-e-e')=2$. This is a contradiction, by
\cite[Theorem~2.1]{grone} and the result holds.
\end{proof}

\par As an immediate result we have.

\begin{corollary}
Let $G_1=C(T_1,\ldots ,T_{g_1})$ and $G_2=C(T'_1,\ldots
,T^{'}_{g_2})$ be unicyclic graphs containing a perfect matching and
$\mu_{k}(G)=2$. Then $\mu_{k}(G)=\mu_k(G-e)=\mu_k(G-e-e')=2$, where
$e\in C_{g_1}$ and $e'\in C_{g_2}$ and $\{e,e'\}\cap M=\emptyset$.
\end{corollary}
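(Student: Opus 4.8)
The plan is to deduce the corollary directly from Theorem~\ref{unicyclicbi} together with the eigenvalue interlacing of Theorem~\ref{eqution}, with no new combinatorics needed. First I would set $n=n_1+n_2$, which is even by Remark~\ref{perfect} since $G$ has a perfect matching, say $n=2k'$. The hypothesis is $\mu_k(G)=2$. By Theorem~\ref{unicyclicbi} there is an edge $e\in C_{g_1}$ (or symmetrically in $C_{g_2}$) with $e\notin M$ such that $\mu_k(G-e)=2$; fix such an $e$, and note $G-e$ is a unicyclic graph still containing the perfect matching $M$.

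Next I would produce the second edge. Applying Theorem~\ref{unicyclicbi} is not quite symmetric here because $G-e$ is unicyclic, not bicyclic, so instead I would argue as in the proof of Theorem~\ref{unicyclicbi}: pick any $e'\in C_{g_2}$ with $e'\notin M$ (such an edge exists because a perfect matching cannot contain all $g_2$ edges of an even-or-odd cycle once $g_2\ge 3$, and in fact the proof of Theorem~\ref{unicyclicbi} already selects such an $e'$). Then $G-e-e'$ is a forest containing the perfect matching $M$, so by \cite[Theorem~2]{ming} (or \cite[Theorem~2.1]{grone} as used above) $\mu_{k'}(G-e-e')=2$. Now chain the interlacing inequalities of Theorem~\ref{eqution}: from $G$ to $G-e$ we get $\mu_k(G)\ge\mu_k(G-e)\ge\mu_{k+1}(G)$, and from $G-e$ to $G-e-e'$ we get $\mu_k(G-e)\ge\mu_k(G-e-e')\ge\mu_{k+1}(G-e)$, so in particular $2=\mu_k(G)\ge\mu_k(G-e)\ge\mu_k(G-e-e')$.

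The only remaining point is the reverse inequality showing these are all exactly $2$, and this is where the argument in Theorem~\ref{unicyclicbi} is reused: there it is shown that $k=k'$, i.e. the index $k$ witnessing eigenvalue $2$ in $G$ coincides with the index $n/2$ witnessing eigenvalue $2$ in the spanning forest $G-e-e'$. Granting $k=k'$, we get $\mu_k(G-e-e')=\mu_{k'}(G-e-e')=2$, and then the sandwich $2=\mu_k(G)\ge\mu_k(G-e)\ge\mu_k(G-e-e')=2$ forces $\mu_k(G-e)=2$ as well. Hence $\mu_k(G)=\mu_k(G-e)=\mu_k(G-e-e')=2$ with $e\in C_{g_1}$, $e'\in C_{g_2}$, and $\{e,e'\}\cap M=\emptyset$, which is the assertion. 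I do not anticipate a genuine obstacle: the corollary is essentially a repackaging of the chain of inequalities established inside the proof of Theorem~\ref{unicyclicbi}, the one thing worth stating carefully being the existence of a cycle edge of $C_{g_2}$ avoiding $M$, which follows since $M$ restricted to a cycle is a partial matching and cannot cover all its edges.
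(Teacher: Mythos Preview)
Your proposal is correct and matches the paper's intent: the corollary is stated there as an ``immediate result'' of Theorem~\ref{unicyclicbi} with no separate proof, and what you have written is precisely the unpacking of the interlacing chain and the identification $k=k'$ already carried out inside that theorem's proof. A minor remark: $G-e-e'$ is in fact a spanning tree (connected), not merely a forest, though this does not affect your argument.
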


In the follow, we state the condition under which the bicyclic
graphs have a Laplacian eigenvalue $2$.

\begin{theorem}\label{twotree}
Let $G_{1}=C(T_{1},\ldots ,T_{g_{1}})$ and $G_{2}=C(T^{'}_{1},\ldots
,T^{'}_{g_{2}})$ be unicyclic graphs containing a perfect matching
which have a Laplacian eigenvalue $2$ and $G=G_1\odot G_2$ be a
bicyclic graph. Let $s_{1}$ and $s_{2}$ be the number of $T_{i}$ and
$T'_{j}$ of odd orders of $G_{1}$ and $G_{2}$, respectively. Then
$s_{1}\equiv s_{2}\equiv 0(mod\,4)$ if and only if $L(G)$ has an
eigenvalue $2$.
\end{theorem}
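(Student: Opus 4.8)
The plan is to treat the two implications separately, both resting on one structural fact about a unicyclic graph $H=C(S_1,\dots,S_g)$ with cycle $C$ and roots $v_i\in V(C)$: in \emph{any} perfect matching of $H$, the root $v_i$ is matched to a vertex of $S_i$ exactly when $|V(S_i)|$ is even, and matched along $C$ exactly when $|V(S_i)|$ is odd. This is immediate because every non-root vertex of $S_i$ has all its neighbours inside $S_i$, so a perfect matching restricted to $S_i$ always covers $V(S_i)\setminus\{v_i\}$; it covers $S_i$ entirely (forcing $|V(S_i)|$ even) precisely when $v_i$ is matched inside, and otherwise $v_i$ is matched along $C$ and $|V(S_i)|-1$ is even. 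Hence the number of odd-order $S_i$ equals twice the number of matching edges lying on $C$. Combining this with Theorem~\ref{ueigenv} gives the lemma I would use repeatedly: if $H$ has a perfect matching and $L(H)$ has eigenvalue $2$, then the number of odd-order $S_i$ is $\equiv 0\pmod 4$. Indeed, take a $\pm r$ eigenvector $X$ for $2$ (Theorem~\ref{ueigenv}); reading Equation~(\ref{one}) at a vertex $w$ of degree $d$ with all entries in $\{-r,r\}$ forces exactly one neighbour to carry $-x(w)$, so pairing each vertex with that neighbour gives a perfect matching across whose edges $X$ flips sign and across whose other edges $X$ is constant; hence the number of its edges on $C$ — the number of sign changes of $X$ around $C$ — is even, and the structural fact converts this into the claimed $\equiv 0\pmod4$.

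For ``$s_1\equiv s_2\equiv 0\pmod 4\ \Rightarrow\ 2$ is an eigenvalue of $L(G)$'' I would exhibit an eigenvector. Choose perfect matchings $M_1$ of $G_1$ and $M_2$ of $G_2$; by the structural fact $|M_1\cap E(C_{g_1})|=s_1/2$ and $|M_2\cap E(C_{g_2})|=s_2/2$ are both even, so one may define $\sigma_1\colon V(G_1)\to\{\pm1\}$ by propagating from a base vertex, flipping across $M_1$-edges and staying constant across all other edges — well defined exactly because the unique cycle $C_{g_1}$ carries an even number of $M_1$-edges — and similarly $\sigma_2$ on $V(G_2)$. Negating $\sigma_2$ if necessary so that the endpoints $u\in V(G_1)$ and $v\in V(G_2)$ of the joining edge get equal values, set $X:=\sigma_1$ on $V(G_1)$ and $X:=\sigma_2$ on $V(G_2)$. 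Relative to the perfect matching $M_1\cup M_2$ of $G$ (which does not use $uv$), $X$ flips across every matching edge and is constant across every other edge, including $uv$; therefore at each vertex $w$ exactly one neighbour carries $-x(w)$, so $\sum_{z\in N_G(w)}x(z)=(d_G(w)-2)x(w)$, i.e.\ $X$ satisfies Equation~(\ref{one}) with $\mu=2$.

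For ``$2$ is an eigenvalue of $L(G)\ \Rightarrow\ s_1\equiv s_2\equiv 0\pmod4$'', first note $G$ has a perfect matching by Remark~\ref{perfect}. Theorem~\ref{unicyclicbi} and its corollary then produce edges $e\in E(C_{g_1})$ and $e'\in E(C_{g_2})$, neither in the matching, with $\mu_k(G-e)=\mu_k(G-e-e')=2$; since $(G-e')-e=G-e-e'$, Theorem~\ref{eqution} yields $2=\mu_k(G-e-e')\le\mu_k(G-e')\le\mu_k(G)=2$, so $L(G-e')$ also has eigenvalue $2$. Now $C_{g_1}$ and $C_{g_2}$ are edge-disjoint in $G_1\odot G_2$, so $G-e$ is unicyclic with cycle exactly $C_{g_2}$ and still has a perfect matching; writing $G-e=C(R_1,\dots,R_{g_2})$ rooted at the $v'_j$, each $R_j$ is the original $T'_j$ except the single tree $R_{j_0}$ into whose branch the vertex $v$ falls, which is $T'_{j_0}$ with the now-acyclic graph $G_1-e$ hung on through $uv$; since $|V(G_1)|$ is even, $|V(R_{j_0})|\equiv|V(T'_{j_0})|\pmod2$, so the number of odd-order $R_j$ is exactly $s_2$. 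Applying the lemma above to $G-e$ gives $s_2\equiv 0\pmod4$, and running the same argument with $G-e'$ (unicyclic with cycle $C_{g_1}$, using $|V(G_2)|$ even) gives $s_1\equiv 0\pmod4$.

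The delicate part is this ``only if'' bookkeeping: checking that $G-e$ really is unicyclic with cycle precisely $C_{g_2}$, isolating the one modified tree $R_{j_0}$ correctly whether or not $v$ already lies on $C_{g_2}$, and using the parity of $|V(G_1)|$ to transfer the count back to $s_2$ — together with making the ``exactly one neighbour of opposite value'' reading of Equation~(\ref{one}) and the induced perfect matching completely rigorous. By contrast the ``if'' direction is routine once the sign propagation is seen to be well defined, which is precisely where the hypotheses $s_1\equiv s_2\equiv 0\pmod4$ enter.
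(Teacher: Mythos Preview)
Your proof is correct and follows the paper's overall strategy closely: for the forward implication both arguments glue an eigenvector of $G_1$ to one of $G_2$ (the paper writes $X'=(X,\tfrac{x(u)}{y(v)}Y)$ while you build the $\pm1$ sign vector explicitly---both implicitly lean on Theorem~\ref{ueigenv} to make the gluing nonzero/well-defined), and for the converse both invoke Theorem~\ref{unicyclicbi} to pass to the unicyclic graph $G-e$, then compare its odd-order-tree count to $s_2$ using that $|V(G_1)|$ is even (and symmetrically for $s_1$).

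The one substantive difference is in how the key unicyclic lemma ``perfect matching $+$ eigenvalue $2\Rightarrow s\equiv 0\pmod4$'' is obtained: the paper simply cites \cite[Theorem~9]{akbari}, whereas you re-derive it from Theorem~\ref{ueigenv} by noting that a $\pm r$ eigenvector forces exactly one opposite-sign neighbour at every vertex, so it induces a perfect matching across which the sign flips; the number of sign changes around the cycle is even, and your structural observation converts this into $s=2\cdot(\text{even})$. This is a nice self-contained alternative that makes the origin of the $\pmod 4$ condition transparent, at the cost of a little more work than the paper's citation.
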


\begin{proof}
Assume $X$ and $Y$ are eigenvectors of $L(G_1)$ and $L(G_2)$
corresponding to the eigenvalue $2$, respectively. So vectors $X$
and $Y$ satisfy Equation (\ref{one}). Let $u$ and $v$ be two
vertices of $V(G_1)$ and $V(G_2)$ with $uv\in E(G)$. Now let
$X'=(X,\frac{x(u)}{y(v)}Y)$. We show that $X'$ satisfies Equation
(\ref{one}) for $\mu=2$. First note that $d_G(p)=d_{G_1}(p)$ for all
$p\in V(G_1)-\{u\}$ and $d_G(q)=d_{G_2}(q)$ for all $q\in
V(G_2)-\{v\}$. So Equation (\ref{one}) holds for all vertices
$V(G)-\{u,v\}$. Also,

$(d_G(u)-2)x'(u)=(d_{G_1}(u)-2)x'(u)+x'(u)=(d_{G_1}(u)-2)x(u)+x(u) $
\begin{align*}
&=\sum_{v_j\in N_{G_1}(u)}x(v_j)+x(u)=\sum_{v_j\in
N_{G_1}(u)}x'(v_j)+\frac{x(u)}{y(v)}y(v) \\
&=\sum_{v_j\in N_{G_1}(u)}x'(v_j)+x'(v)=\sum_{v_j\in N_G(u)}x'(v_j) \\
\end{align*}
and

$(d_G(v)-2)x'(v)=(d_{G_2}(v)-2)x'(v)+x'(v)=(d_{G_2}(v)-2)\frac{x(u)}{y(v)}y(v)+y(v)\frac{x(u)}{y(v)}$
\begin{align*}
&=\frac{x(u)}{y(v)}\sum_{v_j\in N_{G_2}(u)}y(v_j)+x(u)=\sum_{v_j\in N_{G_2}(v)}(y(v_j)\frac{x(u)}{y(v)})+x'(u) \\
&=\sum_{v_j\in N_{G_2}(u)}x'(v_j)+x'(u)=\sum_{v_j\in N_G(u)}x'(v_j) \\
\end{align*}
 Thus the proof of the `only if' part of the theorem is complete.

Conversely, assume that $L(G)$ has an eigenvalue $2$. Suppose $e=uv$
is a joining edge of $G$ with $u\in V(T_i)$ and $v\in V(T'_j)$. The
unicyclic graph $G-e_1$ has a Laplacin eigenvalue $2$, where $e_1\in
C_{g_{1}}$ or $e_1\in C_{g_{2}}$ and $e_1$ is not in the perfect
matching $M$ of $G$, by Theorem \ref{unicyclicbi}. Without lose of
generality, let $e_1\in C_{g_1}$. Then $s\equiv 0\ (mod\,4)$, where
$s$ is the number of trees of odd orders in $G-e_1=C(T'_1,\ldots
,T'_{j-1},T'_j\cup \{e\}\cup \{G_1-e_1\},T'_{j+1},\ldots
,T'_{g_2})$, by \cite[Theorem 9]{akbari}. If $|V(T'_j)|$ is an even
number, then $T'_j\cup \{e\}\cup \{G_1-e_1\}$ is an even number. So
the trees of odd orders in $G_2$ are the same as the trees of odd
orders in $G-e_1$ and hence $s_2\equiv 0\ (mod\,4)$. If $|V(T'_j)|$
is an odd number, then $T'_j\cup \{e\}\cup \{G_1-e_1\}$ is an odd
number.  So the trees of odd orders in $G_2$ are $T'_j$ and all
trees of odd orders in $G-e_1$ except $T'_j\cup \{e\}\cup
\{G_1-e_1\}$ (see figure $5$). Therefore, $ s_2\equiv 0\ (mod\,4)$
and this completes the proof.

\begin{figure}[!ht]
  \[\begin{tikzpicture}[scale=.4,thin]
  \draw (-7,0) circle (2cm)[];
 \vertex[fill] (v1) at (-5,2.41)[label=north west:$T_i$] {};
 \vertex[fill] (v2) at (-3,1.4) [label=south east:$u$] {};
 \vertex[fill] (v3) at (-5,0)[] {};
 \vertex[fill] (v7) at (-7,-2) [] {};
 \vertex[fill] (v8) at (-5.5,-1.4) [] {};
 \draw (7,0) circle (2cm);
 \vertex[fill] (v4) at (5,0) [] {};
 \vertex[fill](v5) at (3,1.41) [label=south west:$v$] {};
 \vertex[fill](v6) at (5,2.41) [label=north east:$T'_j$] {};
 \vertex[fill] (v9) at (7,-2) [] {};
 \vertex[fill] (v10) at (5.5,-1.4) [] {};
 \path
 (v1) edge (v2)
 (v3) edge (v2)
 (v5) edge (v6)
 (v4) edge (v5);
\draw[dashed] (4.5,1) [partial ellipse=21:370:1.9cm and 2.2cm];
\draw[dashed] (-4.5,1) [partial ellipse=21:370:1.9cm and 2.2cm];
 \draw[line width=2pt] (-3,1.41) .. controls (0,1.41) .. (3,1.41);
 \draw[line width=0pt] (-5.5,-1.4) .. controls (-6.5,-2) ..node[pos=0.5,anchor=south ]{$e_1$} (-7,-2);
 \draw[line width=0pt] (5.5,-1.4) .. controls (6.5,-2) ..node[pos=0.5,anchor=south ]{$e_2$} (7,-2);
\draw (-4,-1.5) ++(-2.5,-1.5) node[anchor=north]{$G_1$} (-1,-1.5);
\draw (2.5,-1.5) ++(4,-1.5) node[anchor=north]{$G_2$} (5.5,-1.5);
\end{tikzpicture} \]
\caption{$G=G_1\odot G_2$}
\end{figure}
\end{proof}
\par As an immediate result from  Theorems \ref{seigenv} and
\ref{twotree}, we have.
\begin{corollary}\label{sung}
Let $G_1=C(T_1,\ldots ,T_{g_1})$ and $G_2=C(T'_1,\ldots
,T^{'}_{g_2})$ be unicyclic graphs containing a perfect matching and
$\mu_{k}(G)=2$. Then $\mu_{k}(G)=\mu_k(G-e)=\mu_k(G-e-e')=2$, where
$e\in C_{g_1}$ and $e'\in C_{g_2}$ and $\{e,e'\}\cap M=\emptyset$.
\end{corollary}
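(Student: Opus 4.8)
The plan is to read the statement off from Theorem \ref{unicyclicbi} together with the index bookkeeping already present in its proof, using the eigenvalue interlacing of Theorem \ref{eqution} and the classical fact that a tree with a perfect matching has $2$ as a \emph{simple} Laplacian eigenvalue, occurring in position $n/2$ (\cite[Theorem~2]{ming}, \cite[Theorem~2]{fan}). Theorems \ref{seigenv}, \ref{ueigenv} and \ref{twotree} enter only to guarantee that the unicyclic pieces $G_1$, $G_2$ and their one-edge connection really do carry an eigenvalue-$2$ eigenvector with entries in $\{-r,r\}$, so that the reduction edges used below are available.

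First, by Remark \ref{perfect} we may take the perfect matching of $G$ to be $M=M_1\cup M_2$, where $M_i$ is a perfect matching of $G_i$; then the joining edge of $G_1\odot G_2$ lies outside $M$. Since $\mu_k(G)=2$, Theorem \ref{unicyclicbi} (and its proof) gives an edge $e$, lying on $C_{g_1}$ or on $C_{g_2}$ and not in $M$, with $\mu_k(G-e)=2$; say $e\in E(C_{g_1})$, so $G-e=(G_1-e)\odot G_2$ is unicyclic with unique cycle $C_{g_2}$ and still carries the perfect matching $M$. Next choose $e'\in E(C_{g_2})\setminus M$: this exists because a matching meets at most $\lfloor g_2/2\rfloor<g_2$ edges of the cycle. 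Then $G-e-e'$ is a tree with perfect matching $M$, so, writing $n=2k'$, \cite[Theorem~2]{ming} gives $\mu_{k'}(G-e-e')=2$, and by \cite[Theorem~2]{fan} this is the only occurrence of the eigenvalue $2$, whence $\mu_{k'+1}(G-e-e')<2$.

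It remains to identify $k$ with $k'$. Applying Theorem \ref{eqution} to the pairs $(G,G-e)$ and $(G-e,G-e-e')$ gives $\mu_k(G-e-e')\le\mu_k(G-e)\le\mu_k(G)=2$, and since $\mu_{k'}(G-e-e')=2$ while the Laplacian eigenvalues are non-increasing in the index, $k\ge k'$. If $k=k'$, then $2=\mu_{k'}(G-e-e')=\mu_k(G-e-e')\le\mu_k(G-e)\le 2$, so $\mu_k(G-e)=\mu_k(G-e-e')=2$ and we are done. If $k>k'$, then chasing the interlacing inequalities of Theorem \ref{eqution} downward from $\mu_{k'}(G-e-e')=2$ to $\mu_k(G)=2$ forces $\mu_{k'+1}(G-e-e')=2$, contradicting the simplicity just noted (equivalently \cite[Theorem~2.1]{grone}); hence $k=k'$.

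The delicate point is precisely this last step — arranging the interlacing inequalities so that any excess $k>k'$ is squeezed out — which is carried out exactly as in the proof of Theorem \ref{unicyclicbi}. The remaining ingredients (a cycle always has an edge missing a prescribed matching; deleting one edge from each of the two independent cycles of a bicyclic graph leaves a tree; $2$ is a simple Laplacian eigenvalue of a tree with a perfect matching) are routine and covered by the cited results.
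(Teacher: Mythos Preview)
Your argument is correct and is exactly the route the paper takes: the corollary is obtained by reading off the index identification $k=k'=n/2$ from the interlacing chain in the proof of Theorem~\ref{unicyclicbi}, and then observing that $\mu_{k'}(G-e-e')=2$ since $G-e-e'$ is a tree with a perfect matching. One remark: your opening sentence about Theorems~\ref{seigenv}, \ref{ueigenv} and \ref{twotree} being needed ``so that the reduction edges used below are available'' is not accurate and can be dropped---the existence of $e\in E(C_{g_1})\setminus M$ and $e'\in E(C_{g_2})\setminus M$ is the elementary counting fact you yourself state later (a matching meets fewer than $g$ edges of a $g$-cycle), and none of those three theorems is actually invoked anywhere in your argument. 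The paper labels this corollary as following from Theorems~\ref{seigenv} and \ref{twotree}, but the statement is verbatim identical to the unlabeled corollary immediately after Theorem~\ref{unicyclicbi}, and your derivation via Theorem~\ref{unicyclicbi} is the natural one.
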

Let $G_1$ and $G_2$ be two unicyclic graphs. Assume $L(G_1)$
($L(G_2)$) has an eigenvalue $2$ and $G=G_1\odot G_2$. if $L(G)$ has
an eigenvalue $2$, then it can not be concluded that $L(G_2)$
($L(G_1)$) has an eigenvalue $2$, as the following example shows:

\begin{example}
Let $G_1$ and $G_2$ be unicyclic graphs in below figure. $L(G_1)$
and $L(G)$ have an eigenvalue $2$ but $L(G_2)$ has no an eigenvalue
$2$.

\begin{figure}[!ht]
  \[\begin{tikzpicture}[scale=.4,thin]
 \vertex[fill] (v2) at (-3,1.4) [label=south:$u$] {};
 \vertex[fill] (v2) at (-3,1.4) [label=north:$0$] {};
 \vertex[fill] (v3) at (-5,1.4)[label=north:$0$] {};
 \vertex[fill] (v4) at (-7,1.4)[label=north:$-1$] {};
 \vertex[fill] (v5) at (-5,-0.6)[label=south:$1$] {};
 \vertex[fill] (v6) at (-7,-0.6)[label=south:$0$] {};
 \vertex[fill] (v7) at (5,0) [label=south:$0$] {};
 \vertex[fill](v8) at (3,1.41) [label=north:$0$] {};
 \vertex[fill](v8) at (3,1.41) [label=south:$v$] {};
 \vertex[fill](v9) at (1,0)[label=south:$0$]{};
 \path
 (v2) edge (v3)
 (v3) edge (v4)
 (v4) edge (v6)
 (v3) edge (v5)
 (v5) edge (v6)
 (v8) edge (v7)
 (v8) edge (v9)
 (v7) edge (v9);

 \draw[line width=3pt] (-3,1.41) .. controls (0,1.41) .. (3,1.41);
\draw (-3,-1) ++(-1.5,-1) node[anchor=north]{$G_1$} (0,-1); \draw
(1,-1) ++(2.5,-1) node[anchor=north]{$G_2$} (4,-1);
\end{tikzpicture} \]
\caption{$G=G_1\odot G_2$}
\end{figure}
\end{example}
Here we establish some condition on degrees of vertices of type of
bicyclic graphs for which such graph has Laplacian eigenvalue $2$.

\begin{theorem}\label{nopermat}
Let $G_1$ and $G_2$ be broken sun graphs of orders $n_1$ and $n_2$
with no perfect matching. If $g_1\equiv g_2\equiv 0\ (mod\,4)$ and
there are odd numbers of vertices of degree $2$ between any pair of
consecutive vertices of degree $3$, then $L(G=G_1\odot G_2)$ has an
eigenvalue $2$.
\end{theorem}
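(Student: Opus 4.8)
The plan is to write down an explicit eigenvector of $L(G)$ for $\mu=2$: first construct one on each factor $G_1$ and $G_2$ separately, and then splice the two together across the bridge edge $e=uv$.

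\emph{An eigenvector on a single factor.} Let $H\in\{G_1,G_2\}$, write its cycle (of length $g$) as $u_1u_2\cdots u_g$ with $g\equiv0\pmod4$, and let $u_{p_1},\dots,u_{p_k}$ be its degree-$3$ vertices (those carrying a pendant), listed in cyclic order. Since $H$ has no perfect matching it is not the cycle $C_g$, so $k\ge1$. Between consecutive $u_{p_i}$ and $u_{p_{i+1}}$ the number of (degree-$2$) interior vertices is odd, so $p_{i+1}-p_i$ is even; hence all $p_i$ lie in one residue class modulo $2$, and after cyclically relabelling the start of the cycle I may assume every $p_i$ is even. Define $X$ on $V(H)$ by $x(u_j)=0$ for even $j$, $x(p)=0$ for every pendant $p$, and $x(u_j)=(-1)^{(j-1)/2}$ for odd $j$. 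Because $g/2$ is even — this is precisely where $g\equiv0\pmod4$ is used — the alternating values on the odd indices $1,3,\dots,g-1$ close up consistently around the cycle, so $X$ is well defined; and $X\ne0$ since the odd-indexed (hence degree-$2$) vertices carry $\pm1$. One then verifies Equation~(\ref{one}) with $\mu=2$ vertexwise: at an odd-indexed vertex, which has degree $2$, both cycle-neighbours are even-indexed with value $0$, so $(2-2)x=0$; at an even-indexed vertex of degree $2$ or $3$, its two cycle-neighbours are odd-indexed with opposite values and its pendant, if any, has value $0$, so the neighbour-sum is $0=(d-2)\cdot0$; at a pendant, $(1-2)\cdot0=0$. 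Thus $L(H)X=2X$, so $2$ is an eigenvalue of both $L(G_1)$ and $L(G_2)$.

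\emph{Splicing across the bridge.} Let $X_1,X_2$ be these eigenvectors for $G_1,G_2$, and let $e=uv$ be the bridge of $G=G_1\odot G_2$ with $u\in V(G_1)$ and $v\in V(G_2)$. For scalars $a,b$ set $Z=(aX_1,bX_2)$. Every vertex other than $u$ and $v$ retains its degree from the corresponding factor, so Equation~(\ref{one}) for $Z$ and $\mu=2$ holds there automatically; at $u$, using $(d_{G_1}(u)-2)x_1(u)=\sum_{w\in N_{G_1}(u)}x_1(w)$ together with $d_G(u)=d_{G_1}(u)+1$, it reduces to the single scalar equation $a\,x_1(u)=b\,x_2(v)$, and likewise at $v$. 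Choosing $(a,b)=(x_2(v),x_1(u))$ when this is not the zero pair, and $(a,b)=(1,0)$ otherwise, produces a nonzero $Z$ (since $X_1,X_2\ne0$) satisfying Equation~(\ref{one}) for $\mu=2$; hence $L(G)$ has eigenvalue $2$.

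\emph{Where the work lies.} The content is entirely in the first step, and within it in two parity observations: the odd-gap hypothesis forces all degree-$3$ vertices into one class modulo $2$, and $g\equiv0\pmod4$ is exactly what makes the alternating $\pm1$ pattern close up around the cycle. The assumption that $G_1$ and $G_2$ have no perfect matching is not needed for the construction itself; it only rules out the trivial case $H=C_g$ and separates this theorem from the perfect-matching situation treated in Theorems~\ref{seigenv} and~\ref{twotree}. The splicing is the same device used in the proof of Theorem~\ref{twotree}, the one extra point of care being that these factor eigenvectors may vanish at $u$ or $v$, which is why $(a,b)$ is selected as above rather than obtained by a single rescaling.
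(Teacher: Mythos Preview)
Your proof is correct and follows essentially the same approach as the paper: construct a $\{0,\pm1\}$-valued eigenvector on each factor by the periodic pattern $0,1,0,-1$ along the cycle (so that the degree-$3$ vertices and the pendants all receive the value $0$), and then combine the two across the bridge. The paper handles the combination by a short case analysis on the degrees of $u$ and $v$, whereas your uniform choice $(a,b)=(x_2(v),x_1(u))$, falling back to $(1,0)$ when both vanish, is a slightly cleaner packaging of the same idea.
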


\begin{proof}
Assume that $g_1\equiv g_2\equiv 0\ (mod\,4)$ and there are odd
numbers of vertices of degree $2$ between any pair of consecutive
vertices of degree $3$ in $G_1$ and $G_2$, therefore $L(G_1)$ and
$L(G_2)$ have an eigenvalue $2$, by \cite[Theorem~10]{akbari}. Let
the edge of $G$ joining $G_1$ and $G_2$ be $e=uv$, where $v\in
V(G_1)$ and $u\in V(G_2)$. We can assign $\{-1,0,1\}$ to the
vertices of $C_{g_1}$ and $C_{g_2}$, by the pattern $0,1,0,-1$
consecutively starting with a vertex of degree $3$, and assign to
each pendant vertex the negative of value of its neighbor to obtain
eigenvectors $X$ and $Y$ of $L(G_1)$ and $L(G_2)$ corresponding to
the eigenvalues $2$, respectively. If $u$ and $v$ are two pendant
vertices or two vertices of degree $3$ or one of them is a pendant
vertex and the other is  of degree $3$, then $Z=(X,Y)$ is an
eigenvector of $L(G)$ corresponding to the eigenvalue $2$ (note that
Equation (\ref{one}) is satisfied). If $u$ is a vertex of degree one
or degree $3$ and $v$ is a vertex of degree $2$, so
$Z=(X,\overrightarrow{0})$ is an eigenvector of $L(G)$ corresponding
to the eigenvalue $2$ (note that Equation (\ref{one}) is satisfied).
If $u$ and $v$ are two vertices of degree $2$, thus if $x(u)=y(v)$,
then $Z=(X,Y)$. If $x(u)=0$ and $y(v)\neq 0$, so
$Z=(X,\overrightarrow{0})$. If $x(u)\neq y(v)$ and both of them are
other than $0$, hence $Z=(X,-Y)$. Therefore $Z$ satisfies in
Equation (\ref{one}) for $\mu=2$ and $L(G)$ has an eigenvalue $2$.
Therefore the result follows.
\end{proof}

\begin{theorem}\label{onepermat}
Let $G_1$ be a broken sun graph of orders $n_1$ with no perfect
matching and $G_2$ be a unicyclic graph of order $n_2$ with a
perfect matching. If $L(G_1)$ and $L(G_2)$ have an eigenvalue $2$,
then $L(G_1\odot G_2)$ has an eigenvalue $2$.
\end{theorem}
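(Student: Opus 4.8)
The plan is to build an eigenvector of $L(G)$ for the eigenvalue $2$ by gluing together eigenvectors of $L(G_1)$ and $L(G_2)$ along the connecting edge $e=uv$, where $u\in V(G_1)$ and $v\in V(G_2)$, and then to check Equation (\ref{one}) with $\mu=2$ vertex by vertex. Nothing deeper than this gluing is needed; the only point requiring attention is that adding $e$ raises the degrees of $u$ and $v$ by one, so the two ``interface'' equations (at $u$ and at $v$) will close up only if the entries at $u$ and $v$ are suitably matched.

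First I would fix the pieces. By hypothesis $L(G_2)$ has the eigenvalue $2$, and since $G_2$ is unicyclic with a perfect matching, Theorem \ref{ueigenv} gives an eigenvector $Y$ of $L(G_2)$ for $2$ all of whose entries equal $\pm r$; after scaling we may assume these entries lie in $\{-1,1\}$, so in particular $y(v)\neq 0$. Also $L(G_1)$ has the eigenvalue $2$; since $G_1$ is a broken sun graph without a perfect matching, \cite[Theorem~10]{akbari} together with the eigenvector construction used in the proof of Theorem \ref{nopermat} (the $0,1,0,-1$ pattern around $C_{g_1}$, each pendant vertex getting the negative of the value of its support) yields an eigenvector $X$ of $L(G_1)$ for $2$ with entries in $\{-1,0,1\}$, so $x(u)\in\{-1,0,1\}$.

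Next I would define $Z\in\mathbb{R}^{n_1+n_2}$ by cases on $x(u)$: put $Z=(X,\overrightarrow{0})$ if $x(u)=0$; put $Z=(X,Y)$ if $x(u)=y(v)$; and put $Z=(X,-Y)$ if $x(u)=-y(v)$, noting that $-Y$ is again an eigenvector of $L(G_2)$ for $2$. Since $x(u)\in\{-1,0,1\}$ and $y(v)\in\{-1,1\}$ these three cases are exhaustive, and in each of them the values at the two interface vertices satisfy $z(v)=x(u)$ and $z(u)=y(v)$. (One can also avoid the special forms of $X$ and $Y$ altogether: if $x(u)\neq 0$, replace $X$ by $\frac{y(v)}{x(u)}X$ so that $x(u)=y(v)$ and take $Z=(X,Y)$; if $x(u)=0$, take $Z=(X,\overrightarrow{0})$. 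This shows the conclusion really only uses that each of $L(G_1),L(G_2)$ has the eigenvalue $2$, but the case split above stays closest to the bookkeeping of Theorem \ref{nopermat}.)

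Finally I would verify that $Z\neq 0$ and that $Z$ satisfies (\ref{one}) for $\mu=2$. For every vertex $p\notin\{u,v\}$ the degree and the neighbourhood of $p$ in $G$ agree with those in $G_1$ or in $G_2$, and $Z$ restricts there to $X$, to $\pm Y$, or to $\overrightarrow{0}$, so (\ref{one}) holds by the corresponding relation for $X$ or $Y$. At $u$ we have $d_G(u)=d_{G_1}(u)+1$, and using (\ref{one}) for $X$ in $G_1$, i.e. $\sum_{w\in N_{G_1}(u)}x(w)=(d_{G_1}(u)-2)x(u)$, we get $(d_G(u)-2)z(u)=(d_{G_1}(u)-2)x(u)+x(u)=\sum_{w\in N_{G_1}(u)}x(w)+z(v)=\sum_{w\in N_G(u)}z(w)$, exactly because $z(v)=x(u)$; the symmetric computation at $v$ works because $z(u)=y(v)$, while in the case $x(u)=0$ both entries and both relevant sums vanish so the two equations are immediate. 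Hence $L(G)$ has the eigenvalue $2$. I expect the main (and essentially only) obstacle to be precisely this matching of entries at $u$ and $v$, which must absorb the single extra edge incident to each of them; once that is arranged, the rest of the verification is routine.
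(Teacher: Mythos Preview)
Your proposal is correct and follows essentially the same gluing-of-eigenvectors approach the paper intends (the paper's proof is the one line ``proved like Theorem \ref{nopermat} by the similar method''); you make explicit that for $G_2$ one invokes Theorem \ref{ueigenv} rather than the $0,1,0,-1$ pattern, and your parenthetical scaling argument---which is exactly the construction in the proof of Theorem \ref{twotree}---removes any dependence on \cite[Theorem~10]{akbari} for the specific $\{-1,0,1\}$ form of $X$. One tiny slip: the assertion ``$z(u)=y(v)$'' is false in the cases $Z=(X,\overrightarrow{0})$ and $Z=(X,-Y)$; what you actually need and use at $v$ is $z(u)=z(v)$ (equivalently $z(u)=\tilde y(v)$ for the copy $\tilde Y\in\{Y,-Y,\overrightarrow{0}\}$ placed on $G_2$), and that identity does hold in all three cases.
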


\begin{proof}
It is proved like Theorem \ref{nopermat} by the similar
method.
\end{proof}

    \bibliographystyle{siamplain}
$$$$
    \bibliography{references}

\end{document}